\newtheorem{theorem}{Theorem}
\newtheorem{lemma}[theorem]{Lemma}
\newtheorem{proposition}[theorem]{Proposition}
\theoremstyle{definition}
\newtheorem{definition}[theorem]{Definition}
\newtheorem{conjecture}[theorem]{Conjecture}
\numberwithin{equation}{section}
\newcommand{\R}{\mathbb{R}}    
\newcommand{\bdry}{\partial}
\newcommand{\setm}{\smallsetminus}
\newcommand{\arc}[1]{\gamma_{#1}}
\newcommand{\len}[1]{\ell_{#1}}
\newcommand{\altval}{15.66}
\begin{document}

\title[Quadrisecants \& essential secants]{Quadrisecants and essential secants of knots: \\ with applications to the geometry of knots}

\author{Elizabeth Denne}
\address{Elizabeth Denne, Washington \& Lee University, Department of Mathematics, Lexington VA 24450, USA}
\email{dennee@wlu.edu}

\date{August 8, 2016.}

\begin{abstract}A quadrisecant line is one which intersects a curve in at least four points, while an essential secant captures something about the knottedness of a knot. This survey article gives a brief history of these ideas, and shows how they may be applied to questions about the geometry of a knot via the total curvature, ropelength and distortion of a knot.
\end{abstract}

\keywords{Knots, quadrisecants, trisecants, essential secants, total curvature, distortion, ropelength.}
\subjclass[2010]{57M25}
%%%%%%%%%%%%%%%%%%%%%%%%%%%%%
\maketitle

%%%%%%%%%%%%%%%%%%%%%%%%%%%%%
\section{Introduction} \label{section:intro}

%Throughout this chapter, we will use \emph{knotted curve} to mean an oriented nontrivial tame knot in $\mathbb{R}^3$.

In this survey article, we consider the ways in which lines intersect knots. Some of these capture the knottedness of a knot. Trisecant and quadrisecant lines
are straight lines which intersect a knot in at least three, respectively four distinct places. It is clear that any closed curve has a $2$-parameter family secants. A simple dimension count shows there is a $1$-parameter family of trisecants and that quadrisecants are discrete ($0$-parameter family). A planar circle unknot does not have any trisecants and quadrisecants, however nontrivial tame knots must have them. In general, we do not expect knots  to have quintisecants (or higher order secants), they exist only for a codimension~1 (or higher) set of knots.  

The existence of quadrisecants and essential secants gives insight into the geometry of knots.  Section~\ref{section:quad} gives definitions of quadrisecants and essential secants and a brief history of known results. It ends with a discussion of the open question of finding bounds on the number of quadrisecants for a given knot type. Section~\ref{section:key-ideas} gives a brief outline of the tools used in many of the proofs of results about quadrisecants. Section~\ref{section:applications} shows how the ideas may be applied to results about the geometry of knots via the total curvature, second hull, ropelength and distortion of a knot.

%%%%%%%%%%%%%%%%%%%%%%%
\section{Quadrisecants} \label{section:quad}
Recall that a {\em knot} is a homeomorphic image of $S^1$ in $\R^3$, modulo reparametrizations, and a tame knot is one that is ambient isotopic to a polygonal knot. 

\begin{definition} Given a knot $K$, an {\em $n$-secant line} is an oriented line which intersects $K$ in at least $n$ components. An {\em $n$-secant} is an ordered $n$-tuple of points of $K$ (no two of which lie in a common straight subarc of $K$) which lie in order along an $n$-secant line.
\end{definition}

As previously described, by a secant we mean a $2$-secant, by a trisecant a $3$-secant, and by a quadrisecant a $4$-secant. For a closed curve~$K$, any two distinct
points determine a straight line. These points are a secant if and
only if they do not lie on a
common straight subarc of~$K$. Thus the set of
secants $S=K^2\setm\tilde{\Delta}$ and is 
topologically an annulus. (Here $\tilde{\Delta}$ denotes the set of $n$-tuples in which some pair of points lie in a common straight subarc of~$K$.)

We now consider the set of trisecants of a knot, denoted $\mathcal{T}\subset K^3\setm\tilde{\Delta}$.  For any trisecant $abc$, there are~$\vert S_3/C_3\vert=2$ cyclic orderings of the oriented knot and trisecant. We could label these by their lexicographically least elements ($abc$ or~$acb$), but we call them {\em direct} and {\em
reversed} respectively. Figure~\ref{fig:trisecants} illustrates the two types of
trisecant. Flipping the orientation of the knot or the trisecant would change its type.

\begin{figure}[htbp]
\begin{center}
\begin{overpic}[scale=0.9]{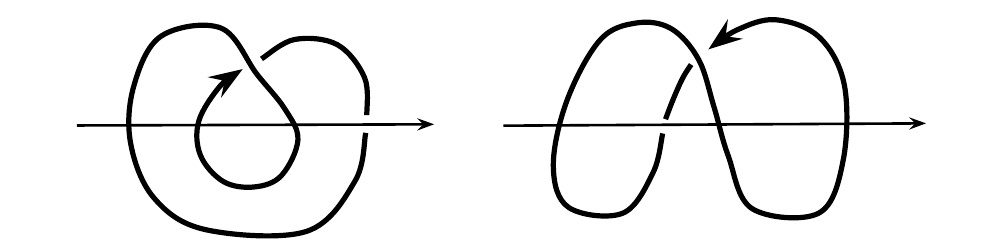}
\put(8,13.5){$a$}
\put(17.5,13.5){$b$}
\put(30,13.5){$c$}
\put(54,13.5){$a$}
\put(73,13.5){$b$}
\put(85.5,13.5){$c$}
\end{overpic}
\caption{These trisecants are \emph{reversed} (left) and {\em direct} (right) because the cyclic order of the points along $K$ is $acb$ and $abc$ respectively.}
\label{fig:trisecants} 
\end{center}
\end{figure}

Just as with trisecants, we may compare the points of a quadrisecant line with their
ordering along the knot. For quadrisecant~$abcd$, the order along~$K$
is a cyclic order, and ignoring the orientation of~$K$ is just a
dihedral order. Thus there are~$\left|S_4/D_4\right|=3$ dihedral
orderings of a quadrisecant and non-oriented
knot. We can represent these equivalence classes by $abcd$, $abdc$ and $acbd$, where we have again chosen the
lexicographically least order as the name for each. Figure~\ref{fig:quadord}
illustrates these orderings.  

\begin{figure}
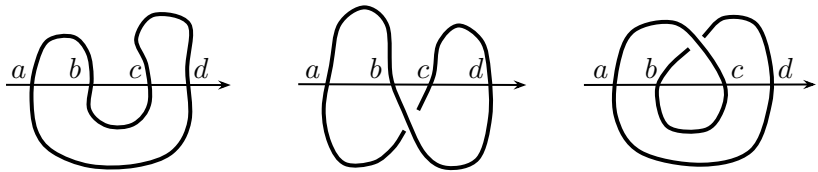

\begin{center}
\begin{overpic}{quadord}
\put(3.3,12){$a$}
\put(10,12){$b$}
\put(17,12){$c$}
\put(24.5,12){$d$}
\put(37.5,12){$a$}
\put(45,12){$b$}
\put(50.5,12){$c$}
\put(56.5,12){$d$}
\put(71,12){$a$}
\put(77,12){$b$}
\put(87,12){$c$}
\put(92.5,12){$d$}
\end{overpic}
\caption{From left to right, quadrisecants~$abcd$ are simple, flipped and alternating.}
\label{fig:quadord}
\end{center}
\end{figure}

\begin{definition} Quadrisecants of type~$acbd$ are called \emph{alternating quadrisecants}. Quadrisecants of types~$abcd$ and~$abdc$ are called \emph{simple} and \emph{flipped} respectively.
\end{definition}

When discussing quadrisecant $abcd$, we will usually choose to orient $K$ so that $b\in\arc{ad}$. This means that the cyclic order of points along $K$ will be $abcd$, $abcd$, or $acbd$, depending on the type of quadrisecant.

%%%%%%%%%%%%%%%%%%%%%%%%%%%%%%

\subsection{Essential secants}
Before discussing known results about quadrisecants, we pause to introduce the notion of an essential secant.  (The definitions in this subsection can all be found in \cite{DDS}.)  G.~Kuperberg \cite{Kup} first introduced this idea (which he called ``topologically nontrivial'') in his paper about quadrisecants. Being essential captures part of the knottedness of the knot. 

Generically, the knot K together with the secant segment between two points on $K$ forms a knotted $\Theta$-graph in space (that is, a graph with three edges connecting the same two vertices).

\begin{definition}\label{def:esstheta}
  Suppose $\alpha$, $\beta$ and $\gamma$ are three disjoint simple
  arcs from points~$a$ to~$b$, forming a knotted $\Theta$-graph, as illustrated in Figure~\ref{fig:essdef}. 
  Let $X:=\R^3\setm (\alpha\cup\gamma)$, and let $\delta$ be a
  parallel curve to $\alpha\cup\beta$ in $X$. (By \emph{parallel} we mean
  that $\alpha\cup\beta$ and $\delta$ cobound an annulus embedded in
  $X$.) We choose $\delta$ to be homologically trivial in $X$
  (that is, so that $\delta$ has zero
  linking number with $\alpha\cup \gamma$). Let
  $h=h(\alpha,\beta,\gamma)\in\pi_1(X)$ denote the (free) homotopy class of
  $\delta$. Then $(\alpha,\beta,\gamma)$ is \emph{inessential} if
   $h$ is trivial.
We say that $(\alpha,\beta,\gamma)$ is \emph{essential} if it is not
inessential. 
\end{definition}

In other words, the ordered triple $(\alpha,\beta,\gamma)$ is \emph{inessential}
if there is a disk~$D$ bounded by $\alpha\cup\beta$ having no
  interior intersections with the knot $\alpha\cup\gamma$.
  (We allow self-intersections of~$D$, and interior intersections
  with $\beta$, as will be necessary if $\alpha\cup\beta$ is knotted.)

\begin{figure}
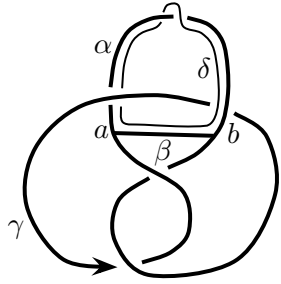

\centerline{\begin{overpic}{EssDef}
\put(41,27){$\alpha$}
\put(41,17){$a$}
\put(56.5,16.5){$b$}
\put(48,14.5){$\beta$}
\put(31,6){$\gamma$}
\put(53,24){$\delta$}
\end{overpic}}
\caption{In the knotted
  $\Theta$-graph $\alpha\cup\beta\cup\gamma$, the ordered triple
  $(\alpha,\beta,\gamma)$ is essential. To see this, we find the parallel
  $\delta$ to $\alpha\cup\beta$ which has linking
  number zero with $\alpha\cup\gamma$ and note that is is
  homotopically nontrivial in the knot complement $\R^3\setm(\alpha
  \cup\gamma)$. In this illustration, $\beta$ is the straight segment
  $\overline{ab}$, so we may equally say that the arc $\alpha=\arc{ab}$ of the
  knot $\alpha\cup\gamma$ is essential.}
\label{fig:essdef}
\end{figure}

This notion is clearly a topological invariant of the (ambient isotopy)
class of the knotted $\Theta$-graph. 
We apply this definition to arcs of a knot $K$
below. But first, some useful notation. Let $a,b \in K$. The arc from
$a$ to~$b$ following the orientation of 
the knot is denoted $\arc{ab}$ and has length $\len{ab}$. The arc from $b$ to $a$, $\arc{ba}$, with length $\len{ba}$ is
similarly defined. The secant segment from $a$ to $b$ is denoted
$\overline{ab}$.

\begin{definition}\label{def:essarc}
 Assume $K$ is a nontrivial tame knot, $a,b\in K$, and $\ell=\overline{ab}$.
  We say $\arc{ab}$ is \emph{essential} if for every $\epsilon >0$ there exists some 
  $\epsilon$-perturbation of $\ell$ (with endpoints fixed) to a tame curve $\ell'$ such that   
  $K\cup \ell'$ forms an embedded $\Theta$ in which $(\arc{ab},\ell',\arc{ba})$ is
  essential.
\end{definition}

Note that this definition is quite flexible, as it allows for the situation where $K$ intersects~$\ell$. In \cite{DDS} Proposition 6.2, we show that it also ensures that the set of essential secants is closed in~$S$.

\begin{definition} 
  A secant $ab$ of $K$ is \emph{essential} if
  both subarcs $\arc{ab}$ and $\arc{ba}$ are essential. Otherwise it
  is \emph{inessential}. Let $ES$ be the \emph{set of essential secants
  in~$S$}. 
\end{definition}

It is straightforward to see that if $K$ is an unknot, then any arc $\arc{ab}$ is inessential. (Because the homology and homotopy groups of $X:=\R^3\setm K$ are equal for an unknot, so any curve having zero linking number with $K$ is homotopically trivial in $X$.) We can use Dehn's Lemma to prove a converse statement: if $a, b \in K$ and both
$\arc{ab}$ and $\arc{ba}$ are inessential, then $K$ is unknotted. (See for instance \cite{CKKS, DDS}.)

Later, in applications of these ideas, we need to find the least length of an essential arc, and use this to get better bounds on ropelength and distortion. This leads us to consider
what happens when arcs change from inessential to essential.

\begin{theorem}[\cite{DDS} Theorem 7.1] \label{thm:changeover} Suppose $\arc{ac}$ is in the boundary of the set of essential arcs for a knot $K$. (That is, $\arc{ac}$ is essential, but there are inessential arcs of $K$ with endpoints arbitrarily close to $a$ and $c$.) Then $K$ must intersect the interior of segment $\overline{ac}$, and in fact there is some essential trisecant $abc$.
\end{theorem}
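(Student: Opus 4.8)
The plan is to prove the two assertions in turn: first that the open segment $\overline{ac}$ must meet $K$ in its interior, and then that among the resulting trisecants at least one is essential. Throughout I will use the quoted Proposition~6.2 of \cite{DDS}, which says that the set $ES$ of essential secants is closed in $S$; thus the hypothesis is precisely that the secant $ac$ lies in $ES$ while every neighborhood of it in $S$ meets the open complement $S\setm ES$ of inessential secants. In particular there are sequences $a_i\to a$, $c_i\to c$ in $K$ with each $\arc{a_ic_i}$ inessential.

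For the first claim I argue by contradiction. Suppose the interior of $\overline{ac}$ is disjoint from $K$. Then $K\cup\overline{ac}$ is an honest embedded $\Theta$-graph and no perturbation is needed to test essentialness: the homotopy class $h\in\pi_1(\R^3\setm K)$ of the balanced parallel $\delta$ to $\arc{ac}\cup\overline{ac}$ is computed directly from this $\Theta$-graph. By compactness there is a positive distance from the interior of $\overline{ac}$ to $K$, so a standard transversality estimate (controlling the segment away from the endpoints by Hausdorff closeness, and near the endpoints by the local embeddedness of the tame knot) shows that for $a',c'\in K$ sufficiently close to $a,c$ the segment $\overline{a'c'}$ again has interior disjoint from $K$. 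The $\Theta$-graph $K\cup\overline{a'c'}$ is then ambient isotopic to $K\cup\overline{ac}$ by an isotopy dragging one segment onto the other, and since essentialness is an ambient-isotopy invariant of the knotted $\Theta$-graph, all these nearby secants are essential. This contradicts the boundary hypothesis, so $\overline{ac}$ must meet $K$ in its interior; label the intersection points $b_1,\dots,b_m$ in order along the segment, so that each $ab_jc$ is a trisecant.

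The heart of the matter is to upgrade one of these trisecants to an \emph{essential} one, and this is where I expect the real difficulty. The idea is to track how $h$ varies as the endpoints move: for a small generic perturbation $a',c'$ the segment $\overline{a'c'}$ misses $K$ and sweeps across the points $b_j$, and crossing $K$ transversally at $b_j$ alters $h$ by a conjugate of a meridian $\mu_j$ encircling the strand through $b_j$. On the side where $\arc{a'c'}$ stays essential we recover the nontrivial class of $\arc{ac}$, whereas the boundary hypothesis supplies perturbations making $h$ trivial; hence the accumulated product of meridian crossings is nontrivial, so some single crossing $b=b_j$ is responsible, and the nontriviality of the local $\Theta$-graph change at $b$ is exactly the statement that the trisecant $ab c$ is essential. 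The delicate points are to keep correct track of $h$, together with its basepoint and conjugacy data, as the perturbed secant passes several intersection points, and then to argue that nontriviality of the \emph{product} forces nontriviality of an \emph{individual} crossing rather than merely of the composite; cleanly, one phrases this as a contradiction (assuming no $ab_jc$ is essential and invoking an additivity/juxtaposition property of essential arcs to conclude that $\arc{ac}$ would be either stably essential or inessential), and one must finally reconcile the meridian-crossing model with the formal definition of an essential trisecant. By contrast the first claim is a soft stability argument, and the enumeration of the $b_j$ is routine once transversality of $\overline{a'c'}$ with $K$ is arranged.
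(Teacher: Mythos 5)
Your proposal cannot be checked against a proof in this survey, because the paper only states Theorem~\ref{thm:changeover} as a citation of \cite{DDS}; so it must be judged against the argument in that reference, whose outline (stability of essentialness when the chord misses $K$, then an analysis at the crossing points) your plan does resemble. The genuine gap is in your Part 2, at exactly the step you flag but do not close: the passage from ``the accumulated product of meridian conjugates is nontrivial'' to ``some trisecant $ab_jc$ is essential.'' Two things go wrong there. First, a nontrivial product of conjugates of meridians need not have any single factor that is ``responsible''; worse, in $\pi_1(\R^3\setm K)$ of a nontrivial knot \emph{every} conjugate of a meridian is already nontrivial (Dehn's Lemma / Loop Theorem), so ``the nontriviality of the local $\Theta$-graph change at $b_j$'' holds at every crossing of every knot and distinguishes nothing. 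Second, and more fundamentally, that local nontriviality is not the definition of an essential trisecant, nor does it imply it: by the definitions in Section~2, the trisecant $abc$ is essential only if the secants $ab$ and $bc$ are essential, i.e.\ only if the arcs $\arc{ab},\arc{ba},\arc{bc},\arc{cb}$ are each essential in the sense of Definition~\ref{def:essarc} --- four statements about four different $\Theta$-graphs, each tested with its own perturbed chord. Nothing in your meridian bookkeeping addresses these sub-$\Theta$-graphs; the ``additivity/juxtaposition property of essential arcs'' that you invoke in a single phrase is precisely the missing lemma (relating $h$ of $(\arc{ac},\ell',\arc{ca})$ to essentialness of the arcs cut off at a crossing $b$), and supplying it is the real content of the theorem. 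As written, Part~2 is a plan with the hard step left as a black box, so the theorem is not proved.

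Part 1 is the right soft idea but also has unproved steps you present as routine. For a merely tame knot, local flatness is a topological, not metric, condition: tame curves need not have tangent directions, so ``for $a',c'$ close to $a,c$ the chord $\overline{a'c'}$ again has interior disjoint from $K$'' does not follow from ``local embeddedness'' plus compactness, and needs an argument near the endpoints. Moreover, even granting that, you assert that when the chord misses $K$ ``no perturbation is needed to test essentialness''; but Definition~\ref{def:essarc} demands essential perturbations at \emph{every} scale $\epsilon$, and small perturbations forming embedded $\Theta$-graphs include ones that wrap around the strand of $K$ through an endpoint, which are not obviously isotopic to $K\cup\overline{a'c'}$. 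One must show such endpoint wraps do not change $h$ (this uses the zero-linking normalization of the parallel $\delta$, or an isotopy rotating the chord's initial direction about the endpoint). These are repairable, but together with the Part~2 gap they mean the proposal falls well short of a proof.
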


Finally, to call a quadrisecant $abcd$ essential, we could follow Kuperberg and require
that the secants $ab$, $bc$ and $cd$ all be essential. But instead, we require this only of those secants whose endpoints are consecutive along the knot

\begin{definition} An $n$-secant $a_1a_2\dots a_n$ is essential if we have $a_ia_{i+1}$ essential for each $i$ such that one of the arcs $\arc{a_ia_{i+1}}$ and $\arc{a_{i+1}a_i}$ includes no other $a_j$.
\end{definition}

That is, for simple quadrisecants, all three secants
must be essential; for flipped quadrisecants the end secants $ab$ and $cd$ must be
essential; for alternating quadrisecants, the middle secant $bc$ must be essential.

%%%%%%%%%%%%%%%%%%%%%%%%%%%%%%

\subsection{Results about quadrisecants}

The simple dimension count outlined in Section~\ref{section:intro} means that we expect nontrivial tame knots to have quadrisecants. Indeed,
in 1933, E. Pannwitz \cite{Pann} first showed that every nontrivial
generic polygonal 
knot in~$\mathbb{R}^3$ has at least $2u^2$ quadrisecants, where
$u$ is the unknotting number of a knot\footnote{She actually stated her result in terms of the knottedness of $K$, which is the minimal number of singular points on the boundary of locally flat singular spanning disks of $K$, and is twice the unknotting number.}. (It is entirely possible that Heinz Hopf suggested this problem to her. In the endnote to his paper, I. F\'ary \cite{Fary} mentions that Hopf used quadrisecants to prove that knots have total curvature greater than or equal to $4\pi$.)

 In the early 1980's, Morton and Mond \cite{MM} rediscovered Pannwitz' result. They independently proved every nontrivial generic knot
has a quadrisecant, and they conjectured that a generic 
knot with crossing number~$n$ has at least~${n}\choose{2}$
quadrisecants. It was not until 1994 that Kuperberg \cite{Kup} managed
to extend the result and show that {\em all} (nontrivial tame) knots in
$\mathbb{R}^3$ have a quadrisecant. To do this, he introduced the notion of an essential secant. In 1998, C. Schmitz \cite{Schm} nearly proved that alternating quadrisecants exist for nontrivial tame knots in Hadamard manifolds, but in his proof some quadrisecants may degenerate to trisecants.

In 2004, the following result was proved in my PhD thesis.
\begin{theorem} [\cite{Denne}]\label{thm-quad} All nontrivial tame knots have at least one alternating quadrisecant.  All nontrivial knots of finite total curvature have at least one essential alternating quadrisecant. 
\end{theorem}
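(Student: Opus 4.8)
The plan is to study the knot through its \emph{space of essential secants}, which by the cited Proposition 6.2 of \cite{DDS} is closed in $S$, and to manufacture a quadrisecant whose four intersection points interleave the knot order in the alternating pattern $acbd$. This space is nonempty: if every arc $\arc{ab}$ were inessential, the Dehn's-lemma converse quoted above would force $K$ to be unknotted, contradicting nontriviality, so essential arcs — and, following Kuperberg's construction in \cite{Kup}, essential secants — exist. I expect the finite-total-curvature hypothesis to play no role in merely producing an alternating quadrisecant; rather, it should be exactly what keeps the relevant middle secant $\overline{bc}$ non-degenerate and essential when passing to limits, thereby upgrading the conclusion to an \emph{essential} alternating quadrisecant.

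The engine of the argument is Theorem~\ref{thm:changeover}. Rather than search for all four collinear points at once, I would first assemble the space of essential trisecants of $K$ and then watch how its members are born and die. Placing $K$ in general position, this space is generically a $1$-manifold, and the point is to classify its endpoints. An essential trisecant $abc$ can cease to exist in only two ways: two of its points may collide, so the trisecant degenerates to a secant — here Theorem~\ref{thm:changeover} applies, tying the endpoint to a transition of $\arc{ac}$ between essential and inessential and guaranteeing the genuine interior intersection of $K$ with $\overline{ac}$ — or a fourth point of $K$ may become collinear with $a,b,c$, which is precisely the appearance of a quadrisecant. Thus every endpoint is either controlled by the changeover theorem or is itself a quadrisecant.

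Next I would track the combinatorial type along each component: the direct/reversed type of the trisecant and, at the quadrisecant endpoints, the dihedral type (simple $abcd$, flipped $abdc$, or alternating $acbd$). Kuperberg's theorem that every nontrivial tame knot has a quadrisecant serves as the anchor, guaranteeing the manifold is nonempty and fixing a base type to propagate. The heart of the matter is a parity argument: as one traverses a component and crosses its endpoints, the type changes in a prescribed way, and I expect a $\bmod 2$ count of the simple and flipped endpoints to be forced out of balance, so that the count can only be reconciled by the presence of at least one alternating quadrisecant.

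The hard part will be the control of the boundary of the space of essential trisecants together with the bookkeeping that forces the alternating pattern. I must rule out the trisecant family running off to infinity or collapsing into a straight subarc of $K$, and I must verify that the type changes genuinely close up to produce $acbd$ and not merely $abcd$ or $abdc$. This is where finite total curvature does its real work: it supplies the compactness needed for limits of secant directions to exist and bounds the local geometry so that arcs cannot oscillate infinitely near a transition, which both makes the $1$-manifold analysis rigorous and ensures that the limiting middle secant remains essential. For a general tame knot the same general-position and changeover machinery should yield an alternating quadrisecant directly; only the essentialness of the middle secant — and hence the stronger second statement — requires the curvature bound. Carrying out both the topological boundary classification and the parity count is what turns Kuperberg's existence of \emph{a} quadrisecant into the existence of an \emph{alternating} one.
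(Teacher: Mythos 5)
Your proposal contains the right supporting cast (general position, the changeover theorem, the direct/reversed dichotomy), but the heart of the proof is missing: you never supply a mechanism by which nontriviality of $K$ forces the \emph{alternating} pattern. The mod-$2$ bookkeeping is pure expectation --- nothing you describe shows the count of endpoint types is ``forced out of balance.'' Components of the essential trisecant space could, for all your argument says, pair degenerate endpoints with degenerate endpoints and simple with simple, and Kuperberg's theorem cannot serve as the anchor: his quadrisecant may be simple or flipped, and no propagation rule you state converts it into an alternating one. The paper's proof gets knottedness into play by an entirely different route. Working with a \emph{generic polygonal} knot, one projects the essential trisecants to the secant annulus $S$ by their first two points, obtaining $ET^d$ and $ET^r$; Lemma~\ref{lem:essquadkey} identifies essential alternating quadrisecants precisely with points of $ET^d\cap ET^r$. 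One then assumes this intersection is empty and derives a contradiction: $ET^d$ and $ET^r$ stay near opposite boundaries of the annulus, a Mayer--Vietoris argument yields a loop winding once around $S$ that avoids all essential trisecants, this loop is used to build a spanning disk for $K$, and the loop theorem makes that disk embedded, so $K$ would be unknotted. That disk construction is the actual engine --- the analogue of Pannwitz's argument in Lemma~\ref{lem:tri1} --- and nothing in your proposal substitutes for it. Relatedly, your endpoint classification misreads the structure: by Proposition~\ref{prop:embedded} the boundary of $\overline{\mathcal{T}}$ consists only of degenerations ($\partial\mathcal{T}\subset\Delta$); quadrisecants appear not as endpoints of the trisecant $1$-manifold but as transverse \emph{double points} of its projection into $S$ (Proposition~\ref{prop:allproj}), and the alternating ones are specifically the crossings of $ET^d$ with $ET^r$.

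You have also misplaced the roles of the hypotheses. The $1$-manifold analysis is carried out for generic polygonal knots, not for finite total curvature knots, and the limit argument (following \cite{Kup}) is needed for \emph{all} nontrivial tame knots, not only for the essential refinement: an arbitrary tame knot is approximated by generic polygonal knots, and it is exactly the \emph{essentialness} of the approximating quadrisecants that prevents the four points from colliding in the limit, so the limit is still an honest alternating quadrisecant. What finite total curvature buys is the ability to retain essentialness itself in the limit, giving the second statement; for a general tame knot one keeps only the alternating quadrisecant. So your claim that ``the same general-position and changeover machinery should yield an alternating quadrisecant directly'' for arbitrary tame knots has it backwards --- without the polygonal approximation and the non-degeneration property of essential secants, the direct construction is not available in that generality.
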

Also in 2004, R. Budney, J. Conant, K. Scannell and D. Sinha~\cite{BCSS} gave a geometric interpretation of the second order Vassiliev invariant. To do this, they used the techniques of compactified configuration spaces and Goodwillie calculus to show that this invariant can be computed by counting alternating quadrisecants with appropriate multiplicity (both for long knots and closed knots). This result implies the existence of alternating quadrisecants for many knots, while Theorem~\ref{thm-quad} shows existence for all nontrivial tame knots.
 
Later in 2013, Budney's Master's student G. Flowers \cite{Flowers} gave a natural extension of that formula to closed knots, but instead of counting quadrisecants lines, he counts five- and six-point cocircuarities. The relative ordering of the knot and intersecting circle are again central to the arguments, and the idea of an alternating quadrisecant becomes that of either a `satanic' or `thelemic' circle.

In 2007, M. Sommer developed a wonderful jReality application {\em Visualization in Geometric Knot Theory} \cite{VGKT-jReality} as part of his Diploma Thesis \cite{VGKT-thesis}. This program allows the user to view the set of trisecants for polygonal knots, and even see how it changes as a vertex is moved. The program also shows the different projections of this set onto different planes, from which the user can read off the different quadrisecants.

In 2008, T.~Fiedler and V.~Kurlin \cite{FK-fiber} viewed quadrisecants in a different way. They fixed a straight line in $\R^3$ and a fibration around it by half-planes, and studied knots in general position with respect to this fibration using secants and quadrisecants lying in fibers. They give the  minimum number of all fiber quadrisecants and fiber extreme secants which occur during an isotopy from one knot to another in terms of a sum of unordered and coordinated writhes of a particular kind of  projection.

In 2009, J.~Viro \cite{Viro} estimated from below the number of lines meeting each of four disjoint smooth curves in both $\R P^3$ and $\R^3$. In~$\R^3$ her arguments may be translated to quadrisecant lines, however her count involves linking numbers and appears to be different from the one in \cite{BCSS}. 

%%%%%%%%%%%%%%%%%%%%%%%%%%%%%
%%%%%%%%%%%%%%%%%%%%%%%%%%%%%

\subsection{Counting quadrisecants and quadrisecant approximations.}

For a generic nontrivial knot, we expect that there are a finite number of quadrisecants. (Indeed, this was one of the results of \cite{BCSS}.) The open and very challenging problem is to give bounds on the number of quadrisecants for a given knot type.  Many of the papers cited in the previous section relate the number of quadrisecants to some topological invariant of knots (unknotting number, finite type invariants, writhe), or give conjectures about what this might be. However none of the known results seem to be close to the number of observed quadrisecants for knots with large crossing number.  Indeed, there are not even conjectures about asymptotic bounds for the number of quadrisecants for particular families of knots. 

There have been several papers which have grappled with this question for knots with small crossings. In 2005, G.T.~Jin \cite{Jin-quad}, gave trigonometric parametrizations of the $3_1, 4_1, 5_1$ and $5_2$ knots and found all quadrisecants for these parametrizations. He also defined the quadrisecant approximation of a knot $K$. 
\begin{definition} Let $K$ be a knot with finitely many quadrisecants intersecting $K$ in finitely many points. These points divide $K$ into a finite number of subarcs. Replacing the subarcs by straight lines gives a polygonal closed curve called the {\em quadrisecant approximation} of $K$.
\end{definition}
Jin then conjectured the following.
\begin{conjecture}[\cite{Jin-quad}] \label{quad-approx} If a knot $K$ has finitely many quadrisecants, then the quadrisecant approximation $\hat{K}$ has the knot type of $K$. Furthermore, $K$ and $\hat{K}$ have the same set of quadrisecants.
\end{conjecture}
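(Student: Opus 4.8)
The plan is to treat the two assertions in turn. List the quadrisecant points in cyclic order along $K$ as $p_1,\dots,p_N$, so that each maximal subarc $\arc{p_ip_{i+1}}$ (indices read mod $N$) contains no quadrisecant point in its interior, and $\hat K$ is the polygon obtained by replacing every $\arc{p_ip_{i+1}}$ with the chord $\overline{p_ip_{i+1}}$.

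For the statement about knot type, my first step would be to show that each maximal subarc $\arc{p_ip_{i+1}}$ is \emph{inessential}. By the disk characterization of inessentiality, this is exactly what is needed to straighten the arc: if $\arc{p_ip_{i+1}}$ is inessential, then $\arc{p_ip_{i+1}}\cup\overline{p_ip_{i+1}}$ bounds an embedded disk $D_i$ whose interior misses the remaining arc of $K$, and an isotopy supported near $D_i$ carries $\arc{p_ip_{i+1}}$ onto $\overline{p_ip_{i+1}}$. To establish inessentiality I would argue by contradiction with Theorem~\ref{thm:changeover}: an essential maximal subarc, after sliding its endpoints inward, would meet the boundary of the set of essential arcs and so yield an essential trisecant through the interior of $\overline{p_ip_{i+1}}$; feeding this into the quadrisecant machinery behind Theorem~\ref{thm-quad} should force a quadrisecant point interior to $\arc{p_ip_{i+1}}$, contradicting maximality.

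The second step is to run all these straightenings at once. Here I would use general position together with innermost-disk and Dehn's Lemma arguments to make the disks $D_i$ mutually disjoint and disjoint from the chords not on their boundaries, and I would straighten the arcs one at a time, verifying at each stage that the move does not drag an already-straightened edge across a remaining strand. Bundled together these isotopies carry $K$ to $\hat K$; in particular this shows $\hat K$ is embedded, which is itself part of the assertion since a priori two chords could cross. If the $D_i$ cannot be made disjoint the argument breaks down, and this is the first place the conclusion is in doubt.

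For equality of the quadrisecant sets, the forward inclusion is mostly bookkeeping but not automatic: given a quadrisecant $abcd$ of $K$, its four points are vertices of $\hat K$ and remain collinear, so the line still meets $\hat K$ in those points, but one must check that no two of $a,b,c,d$ are consecutive among $p_1,\dots,p_N$ --- otherwise they would lie on a common edge of $\hat K$ and violate the secant condition. The reverse inclusion is the crux, and I expect it to be the main obstacle: each chord $\overline{p_ip_{i+1}}$ is a long straight edge that can easily become collinear with three other vertices of $\hat K$ that were not collinear along $K$, creating a spurious quadrisecant. Ruling this out would require quantitative transversality showing that the finitely many fourfold collinearities are isolated and that straightening perturbs secant lines too little to create new ones; reconciling the global straightness of the new edges with the genericity that keeps quadrisecants discrete is exactly the delicate point, and it is where I would expect a proof --- or the conjecture itself --- to stand or fall.
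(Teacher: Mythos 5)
This statement is a conjecture in the paper, not a theorem: the paper gives no proof of it, and in fact it explicitly reports that the conjecture is \emph{false}. Bai, Wang and Wang \cite{BWW} constructed an unknot whose quadrisecant approximation is not even an embedded curve, a second unknot whose quadrisecant approximation is a left-handed trefoil, and then, via a connect-sum operation, a polygonal knot in every knot type whose quadrisecant approximation is either non-embedded or contains a trefoil summand. So no proof strategy can succeed, and the honest hedge in your last sentence --- that this is where ``a proof, or the conjecture itself, would stand or fall'' --- resolves on the side of the conjecture falling.

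It is worth seeing exactly where your argument breaks, because the counterexamples pinpoint it. Your first step is unavailable even as a lemma: for an unknot \emph{every} subarc is inessential (as the paper notes, zero linking number forces null-homotopy in the unknot complement), so Theorem~\ref{thm:changeover} and the machinery behind Theorem~\ref{thm-quad} give no information there --- yet the BWW examples are precisely unknots whose approximations go wrong, so inessentiality of all the maximal subarcs cannot imply that straightening preserves knot type or embeddedness. Moreover, inessentiality of $\arc{p_ip_{i+1}}$ does not hand you an embedded disk avoiding the rest of the knot: Definition~\ref{def:esstheta} explicitly allows the disk to self-intersect and to meet the secant segment, and even after invoking Dehn's Lemma the disks for different $i$ need not be disjoint from each other or from the other chords. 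That is exactly the failure realized in the counterexamples: the straightened chords cross one another (non-embedded approximation) or sweep across remaining strands (a trefoil summand appears). The two places you flagged as delicate --- disjointness of the disks $D_i$ and the reverse inclusion of quadrisecant sets --- are thus not merely delicate but impossible in general; your instinct about where the argument would fall apart was correct, and the literature confirms it falls.
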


In 2011, Jin and S. Park \cite{JP} prove that every hexagonal (6-sided) trefoil knot has exactly three alternating quadrisecants. They then prove that the quadrisecant approximation of a hexagonal trefoil knot is also a trefoil knot, and moreover has the same three alternating quadrisecants as the original knot.

Most recently, S. Bai, C. Wang and J. Wang \cite{BWW} proved that the  Conjecture~\ref{quad-approx} is false. They give two examples of an unknot: the first has a quadrisecant approximation which is not even an embedded curve, and the second has a quadrisecant approximation which is a left-handed trefoil knot. They then use a particular type of connect sum operation to show that there is a polygonal knot in every knot type whose quadrisecant approximation is either not embedded, or contains a trefoil summand. 

There are still many interesting questions to explore here. For example, it is entirely possible that Conjecture~\ref{quad-approx} is true for polygonal knots which have minimum stick number, or for ideal knots (those which minimize some kind of energy like ropelength).

Returning to the question of counting quadrisecants, A. Cruz-Cota and T. Ramirez-Rosas \cite{CC-RR} recently proved the first result giving an upper (rather than a lower) bound on the number of quadrisecants.

\begin{theorem}[\cite{CC-RR}] Let $K$ be a polygonal knot in general position, with exactly $n$ edges. Then $K$ has at most $\displaystyle \frac{n}{12}(n-3)(n-4)(n-5)$ generic quadrisecants.
\end{theorem}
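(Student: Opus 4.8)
The plan is to convert the problem into a line-counting problem and lean on the classical enumerative fact that four lines in general position in $\R^3$ have at most two common transversals. Since $K$ lies in general position, a generic quadrisecant meets $K$ transversally in four points, each lying in the interior of a distinct edge, so the quadrisecant line is a common transversal of those four supporting edges. The first step is therefore to record the correspondence: every generic quadrisecant is determined by an unordered set of four edges together with one of the (at most two) lines meeting all four. To justify the ``at most two,'' I would recall that three pairwise skew lines have as their common transversals exactly one ruling of a unique doubly ruled quadric surface; a fourth line meets that quadric in at most two points, and passing from full lines back to the edge-segments only deletes transversals. This already yields the crude estimate $2\binom{n}{4}$, whose leading term $\tfrac{n^{4}}{12}$ coincides with the target, so all of the genuine work lies in the lower-order terms.

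Next I would account for the four-edge sets that cannot support two honest quadrisecants. The discrepancy $2\binom{n}{4}-\tfrac{n}{12}(n-3)(n-4)(n-5)=\tfrac{n(n-3)^{2}}{2}$ is exactly the surplus that must be removed, and I expect it to be controlled by \emph{adjacency}. Two edges sharing a vertex span a plane and hence are not skew, which degenerates the quadric through them into a cone or a pair of planes. A line meeting both such edges must then either pass through their shared vertex---in which case it meets $K$ at a vertex and is \emph{not} a generic quadrisecant---or lie in their common spanning plane, where the remaining incidence conditions cut it down to at most one line meeting all four edges. Thus a four-edge set containing adjacent edges contributes strictly fewer generic quadrisecants than the generic bound of two, and the improvement over $2\binom{n}{4}$ comes entirely from summing these deficits.

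The final step is the global combinatorics: one counts four-edge subsets of the cyclic sequence of $n$ edges, weighted by the number of adjacent pairs they contain, and arranges the inclusion--exclusion over adjacent pairs (and their coincidences) along the $n$-cycle so that the accumulated deficits telescope to precisely $\tfrac{n(n-3)^{2}}{2}$, leaving the stated bound $\tfrac{n}{12}(n-3)(n-4)(n-5)$.

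The hard part will be the degenerate configurations together with this bookkeeping. Bounding the common transversals when several of the four edges share vertices demands a careful real-algebraic treatment of the degenerate quadric, and one must verify throughout that the counted lines genuinely meet the edge \emph{interiors} (so that they are honest quadrisecants) rather than slipping through vertices. Setting up the inclusion--exclusion so that the per-configuration deficits sum exactly to $\tfrac{n(n-3)^{2}}{2}$, rather than over- or under-counting subsets with two adjacencies, is equally delicate. Finally, the genericity hypotheses packaged into ``general position'' must be made explicit---that non-adjacent triples of edges have pairwise skew supporting lines, that the four intersection points are transverse and avoid vertices, and that no unexpected one-parameter family of transversals arises---since these are precisely what legitimize both the quadric count and the adjacency corrections.
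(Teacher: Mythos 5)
Your strategy---reduce to counting common transversals of four supporting edges, use the doubly ruled quadric to get at most two transversals of four mutually skew lines, then correct for adjacencies over the cyclic edge structure---is exactly the strategy of the cited proof of Cruz-Cota and Ramirez-Rosas (the survey itself states the theorem without proof, but the two needed local facts appear in its Section~3: at most one quadrisecant through a coplanar adjacent pair, at most two through a skew triple plus a fourth edge). Your arithmetic for the required total deficit, $2\binom{n}{4}-\tfrac{n}{12}(n-3)(n-4)(n-5)=\tfrac{n}{2}(n-3)^2$, is also correct. However, the bookkeeping you outline cannot close that gap, and the shortfall is not a delicate inclusion--exclusion issue but a missing case. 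If every four-edge set containing an adjacency is only shown to contribute at most one quadrisecant (deficit at least $1$), the total harvested deficit is at most the number of such subsets, namely
$$\binom{n}{4}-\frac{n(n-5)(n-6)(n-7)}{24}=\frac{n}{2}\left(n^2-8n+17\right),$$
which falls short of $\tfrac{n}{2}(n-3)^2=\tfrac{n}{2}(n^2-6n+9)$ by exactly $n(n-4)$. The missing idea is that any four-edge set containing \emph{three consecutive} edges contributes \emph{zero} generic quadrisecants: a line meeting the interiors of $e_{i-1}$, $e_i$, $e_{i+1}$ would have to lie in the plane spanned by $e_{i-1},e_i$ and also in the plane spanned by $e_i,e_{i+1}$; these planes are distinct (no four vertices coplanar) and intersect precisely in the line supporting $e_i$, and that line meets $K$ in a whole segment, so it is not a generic quadrisecant. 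There are exactly $n(n-5)+n=n(n-4)$ subsets containing a run of three or four consecutive edges, and their deficit of $2$ rather than $1$ supplies precisely the missing $n(n-4)$.

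Relatedly, your proposed weighting ``by the number of adjacent pairs'' does not telescope to $\tfrac{n}{2}(n-3)^2$: summed over all four-edge subsets, the number of adjacent pairs is $n\binom{n-2}{2}=\tfrac{n}{2}(n-2)(n-3)$, and that weighting is not justifiable anyway, since a subset consisting of two \emph{disjoint} adjacent pairs can genuinely carry one quadrisecant (the intersection line of the two spanned planes), so its deficit is $1$, not $2$, while a run of four would need deficit $3$, which is impossible. The correct maximal contributions are: $2$ for four pairwise non-adjacent edges; $1$ for exactly one adjacent pair (the line lies in the spanned plane, and each of the other two edges meets that plane in at most one point, by general position); $1$ for two disjoint adjacent pairs; and $0$ for any run of three or more. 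With the cyclic counts $\tfrac{n(n-5)(n-6)(n-7)}{24}$, $\tfrac{n(n-5)(n-6)}{2}$, $\tfrac{n(n-5)}{2}$ for the first three classes, the bound is
$$\frac{n(n-5)(n-6)(n-7)}{12}+\frac{n(n-5)(n-6)}{2}+\frac{n(n-5)}{2}=\frac{n(n-5)}{12}\bigl[(n-6)(n-7)+6(n-6)+6\bigr]=\frac{n}{12}(n-3)(n-4)(n-5),$$
which is exactly the stated theorem. So your skeleton is the right one, but the zero-contribution analysis of consecutive runs and these exact weights are the substance of the proof, not optional refinements of it.
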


We finish this section by noting that the results of \cite{BCSS} are the only ones relating a count of {\em alternating} quadrisecants to a knot invariant. We have work-in-progress \cite{Den-quad} 
showing that there is at least $u(K)$ number of alternating quadrisecants of a knot $K$,  where $u(K)$ is the unknotting number of $K$. Examples from \cite{Jin-quad} have lead us to conjecture the following (say for knots of finite total curvature).
\begin{conjecture} The figure-8 knot has at least one essential flipped quadrisecant. The $5_2$ knot has at least one essential simple quadrisecant.
\end{conjecture}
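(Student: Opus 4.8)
The plan is to prove each assertion by direct verification on an explicit model of the knot, since the general theory gives no leverage here: Theorem~\ref{thm-quad} guarantees only an essential \emph{alternating} quadrisecant for a knot of finite total curvature, and says nothing about the flipped or simple types that this conjecture concerns. So I would begin from G.T.~Jin's explicit trigonometric parametrizations of the $4_1$ and $5_2$ knots, for which the complete (finite) list of quadrisecants has already been computed. For each knot this reduces matters to a finite checklist: classify every quadrisecant $abcd$ by reading off the cyclic order of $a,b,c,d$ along $K$ and sorting it into simple ($abcd$), flipped ($abdc$), or alternating ($acbd$). This isolates the flipped quadrisecants of the figure-8 and the simple quadrisecants of $5_2$ as a finite set of candidates.

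Second, among these candidates I must verify essentiality in the sense required by the type. For a flipped quadrisecant only the end secants $ab$ and $cd$ need be essential; for a simple quadrisecant all three consecutive secants $ab$, $bc$, $cd$ must be. Recalling that a secant $ab$ is essential exactly when both subarcs $\arc{ab}$ and $\arc{ba}$ are essential, and that an arc is essential when the $\Theta$-graph class $h$ of Definition~\ref{def:esstheta} is a nontrivial element of $\pi_1(\R^3\setm(\alpha\cup\gamma))$, I would, for each candidate secant, compute the relevant homotopy class. Concretely, using the explicit coordinates I would present $\pi_1$ of the complement of the two opposite arcs by a Wirtinger-type presentation and check that the linking-number-zero parallel $\delta$ maps to a nontrivial word. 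A more robust alternative is to rule out \emph{in}essentiality directly: show that no spanning disk for $\alpha\cup\beta$ can avoid the knotted arc $\alpha\cup\gamma$, for instance by exhibiting a nonzero algebraic intersection, or by applying Theorem~\ref{thm:changeover} to detect essential trisecants forcing the boundary behavior.

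The hard part will be making the essentiality verification \emph{rigorous} rather than merely numerical. Deciding triviality in the knot group is the crux: a numerical homotopy calculation only suggests nontriviality, so I would need a certified group-theoretic witness (a homomorphism to a finite group, or a representation that detects the loop $\delta$) or a genuinely geometric obstruction tailored to the specific configurations produced by Jin's parametrizations. I expect this step, not the enumeration, to be where the real difficulty concentrates.

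A cleaner but more ambitious route would replace the case-by-case computation with a structural argument that ties the bridge presentation or the twist structure of these twist knots to the forced type of an essential quadrisecant. If such an argument could be found, it would likely subsume the two isolated statements into a single theorem and explain \emph{why} the figure-8 produces a flipped example and $5_2$ a simple one, rather than verifying it after the fact. The absence of such a structural principle at present is precisely why the statement remains a conjecture.
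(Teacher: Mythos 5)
This statement is an open conjecture in the paper --- the paper offers no proof of it at all, only the remark that examples from \cite{Jin-quad} motivated it. So there is no argument of the author's to compare yours against, and the relevant question is simply whether your proposal would constitute a proof. It would not, for two reasons. First, and most fundamentally, the conjecture is quantified over all conformations of the knot type: the paper's phrasing ``say for knots of finite total curvature'' fixes the regularity class exactly as in Theorem~\ref{thm-quad}, which asserts an essential alternating quadrisecant for \emph{every} nontrivial knot of finite total curvature. Your strategy of enumerating and certifying quadrisecants on Jin's single trigonometric parametrization of $4_1$ (respectively $5_2$) would, even if executed perfectly, establish the conclusion only for that one curve. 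Quadrisecants are not isotopy invariants --- they appear, disappear, and change type as the curve deforms, and essentiality of a secant can be created or destroyed --- so the single-conformation verification is strictly weaker than the conjecture. This is precisely why the existence results the paper does prove (Theorem~\ref{thm-quad}) proceed structurally, by contradiction from knottedness via the configuration of essential trisecants in the secant annulus, rather than by computation on a model curve; your own final paragraph concedes that such a structural argument is what is really needed, and is absent.

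Second, even for the weaker single-conformation statement, your plan defers rather than solves the step that carries all the content: certifying that the homotopy class $h$ of Definition~\ref{def:esstheta} is nontrivial in $\pi_1(\R^3\setm(\alpha\cup\gamma))$ for each candidate secant. You correctly identify the right essentiality requirements for each type (end secants $ab$, $cd$ for flipped; all three consecutive secants for simple), and a certified witness such as a homomorphism onto a finite group is indeed the standard way to make such a check rigorous, but the proposal stops at naming this as ``where the real difficulty concentrates.'' As written, then, the proposal is a reasonable research program --- enumerate via \cite{Jin-quad}, certify essentiality algebraically, then seek a deformation-stable structural principle --- but it contains no completed proof of either assertion, and its first stage is aimed at a weaker statement than the one conjectured.
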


%%%%%%%%%%%%%%%%%%%%%%%%%%%%%%
%%%%%%%%%%%%%%%%%%%%%%%%%%%%%%%

\section{Key ideas in showing quadrisecants exist}\label{section:key-ideas}

Observe that quadrisecants are formed when several trisecants share common
points. Quadrisecant~$abcd$ includes four trisecants (1)$abc$,
(2)$abd$, (3)$acd$, (4)$bcd$.  Pannwitz \cite{Pann} showed
quadrisecants exist by looking for pairs of trisecants like~(1)$abc$
and~(3)$acd$, where the first and third points of trisecant~$abc$ are
the same as the first and second points of trisecant~$acd$. Kuperberg \cite{Kup} showed that quadrisecants exist by looking for pairs
of trisecants like~(2)$abd$ and~(3)$acd$, where the first and third
points of the trisecants are the same. We proved  \cite{Denne} that alternating quadrisecants exist by looking at families~(1)$abc$ and~(2)$abd$, where the first and second points of the trisecants are the same. The quadrisecant count in \cite{BCSS} boiled down to looking at linking numbers of these sets of trisecants in the 3rd associahedron (Stasheff polytope) --- a compactified configuration space. 

All of these arguments presuppose that nontrivial knots have trisecants. Fortunately this is easy to prove, as was originally shown by Pannwitz.

\begin{lemma}[\cite{Pann}] \label{lem:tri1}  Each point of a nontrivial tame knot~$K$ is the 
first point of at least one trisecant. 
\end{lemma}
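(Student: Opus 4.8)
The plan is to prove the contrapositive: I will show that if a point $a \in K$ is the first point of no trisecant, then $K$ bounds an embedded disk and is therefore unknotted, contradicting the hypothesis that $K$ is nontrivial. The key reformulation is that $a$ is the first point of a trisecant exactly when some ray emanating from $a$ meets $K \setm \{a\}$ in two or more points: two such points $b,c$, ordered by their distance from $a$, lie in the order $a,b,c$ along that ray. Hence the no-trisecant assumption is precisely that every ray from $a$ meets $K \setm \{a\}$ at most once; equivalently, the radial projection $\pi_a(x) = (x-a)/|x-a|$ is injective on $K \setm \{a\}$.

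Next I would build the cone over $K$ with apex $a$. Consider $F : S^1 \times [0,1] \to \R^3$ given by $F(\theta,t) = (1-t)a + t\,K(\theta)$, where $K : S^1 \to \R^3$ parametrizes the knot with $K(\theta_0)=a$. This map collapses $S^1 \times \{0\}$ and $\{\theta_0\}\times[0,1]$ to the single point $a$, so it factors through a quotient that is again a disk $D$ with $\partial D$ mapping onto $K$. Under the no-trisecant assumption I claim $F$ is injective modulo this collapse: if $F(\theta,t)=F(\phi,s)$ with the two points outside the collapsed set, then $t(K(\theta)-a)=s(K(\phi)-a)$ forces $K(\theta)$ and $K(\phi)$ to lie on a common ray from $a$, and distinctness would produce a trisecant with first point $a$. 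The same computation rules out an interior point of the cone coinciding with a point of $K$. A continuous injection from a compact space to a Hausdorff space is a homeomorphism onto its image, so the cone is an embedded disk spanning $K$, whence $K$ is unknotted.

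The step I expect to be the main obstacle is the non-degeneracy required by the definition of a trisecant, namely that no two of $a,b,c$ lie in a common straight subarc of $K$. The cone argument directly produces collinear triples, but two points $b,c$ on a common ray from $a$ could in principle lie on a straight subarc of $K$ (forcing $\overline{bc}\subseteq K$), in which case they do not yet constitute a valid trisecant. To handle this I would invoke tameness to reduce to a polygonal knot in general position, where straight subarcs are edges and such alignments can either be avoided or removed by an arbitrarily small perturbation, and then pass to a limit to recover the statement for the original $K$. Tameness is also what guarantees that an embedded (PL) spanning disk genuinely certifies $K$ as the unknot. The remaining verifications — that the quotient of $F$ is a disk and that $F$ is a proper embedding away from the apex — are routine once the injectivity computation above is in hand.
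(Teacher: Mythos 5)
Your main construction is the same as the paper's: the cone from $a$ over $K$ is precisely the union of chords $\overline{ab}$, $b\in K$; your injectivity computation is the paper's observation that two chords meeting away from $a$ must overlap and hence yield a trisecant with first point $a$; and both arguments conclude by noting that an embedded spanning disk forces $K$ to be unknotted. Your worry about the non-degeneracy clause in the definition of trisecant is also legitimate, and it is a point the paper's two-line proof silently elides. Indeed the degeneracy is substantive, not just definitional: if $K$ contains a straight subarc emanating from $a$ along a ray, with a further point or subarc of $K$ on the same ray beyond it, then no valid trisecant with first point $a$ arises from that ray, and the union of chords is not even a disk there (it acquires a one-dimensional ``spike''), so the main argument genuinely breaks in this case rather than merely needing a re-labelling of points.

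The gap is in your proposed repair. Perturbing $K$ to a generic polygonal knot and ``passing to a limit'' does not recover the statement for $K$: a sequence of trisecants $a_nb_nc_n$ of the approximating knots with $a_n\to a$ need not converge to a trisecant of $K$, because $b_n$ and $c_n$ can collide with each other or with $a_n$, or their limit points can land in a common straight subarc of $K$ --- exactly the configurations your definition-check excludes, and exactly the configurations that occur for the knots (those with straight subarcs radially aligned with $a$) that force you into this case in the first place. This degeneration problem is the central difficulty of the whole subject: as the paper stresses in Section~\ref{section:key-ideas}, Kuperberg's limit argument works only because ``the limit of an essential secant remains a secant,'' and a bare trisecant carries no such protection. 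So your fallback step quietly inherits the hardest part of the theory instead of avoiding it. A repair consistent with your framework must handle the degenerate configurations directly --- for instance by showing that, absent trisecants starting at $a$, each ray from $a$ meets $K\setm\{a\}$ in a very restricted set, and then realizing the union of chords (after collapsing radial subarcs, and dealing with the spike phenomenon above) as an embedded disk containing $K$, so that $K$ bounds a subdisk of it; what it cannot do is outsource the problem to an unjustified limit.
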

\begin{proof} Suppose there is a point~$a\in K$ which is not
the start point of any trisecant. The union of all
chords~$\overline{ab}$ for~$b\in K$ is a disk with boundary~$K$. If two chords~$ab$ and~$ac$ intersect at a place other than~$a$, then they overlap
and one is a subinterval of another. They form a trisecant ($abc$ or~$acb$), contrary to the assumption. Thus the disk is embedded and $K$ unknotted, a contradiction.
\end{proof}

The structure of set of trisecants (and quadrisecants) can be understood more easily when we restrict the class of knots considered, such that the restricted class is dense in the set of tame knots. All of the previous work on  quadrisecants we have discussed does this, exactly how depends on the context chosen. We choose to work with polygonal knots with some extra assumptions.
\begin{definition} \label{def:generic} We say that the polygonal knot $K$ in $\R^3$ is {\em generic} if the following conditions are satisfied
\begin{itemize}
\item No four vertices of $K$ are coplanar, no three vertices of $K$ are collinear.
\item Given three pairwise skew edges of $K$, no other edge of $K$ is contained in the quadric generated by those edges.
\item There are no $n$-secants for $n\geq 5$.
\end{itemize}
\end{definition}

Once the existence of {\em essential} quadrisecants is established for generic knots, a limit argument (see \cite{Kup}) is used to show that any nontrivial tame knot has a quadrisecant. The key point here is that the limit of an essential secant remains a secant --- it does not degenerate in the limit.

%%%%%%%%%%%%%%%%%%%%%%%%%%%%%%

\subsection{Trisecants and quadrisecants.} 

We now discuss how trisecants and quadrisecants can arise in generic polygonal knots. The first condition in Definition~\ref{def:generic} means that three (or more) adjacent edges cannot be coplanar. Suppose $e_i$ and $e_{i+1}$ are adjacent edges. A 1-parameter family of trisecants arises when a third edge $e_j$ of the knot intersects certain parts of the plane spanned by $e_i$ and $e_{i+1}$. This family is either homeomorphic to $[0,1]$ or $[0,1)$ depending on which region $e_j$ intersects (see Figure~\ref{fig:planar-trisecants}).  A quadrisecant is formed when a fourth edge intersects one of the trisecant lines. Genericity implies that two non-adjacent edges cannot be coplanar, thus there can be at most one quadrisecant in this case.

\begin{figure}[htbp]
\begin{center}
\begin{overpic}[scale=0.9]{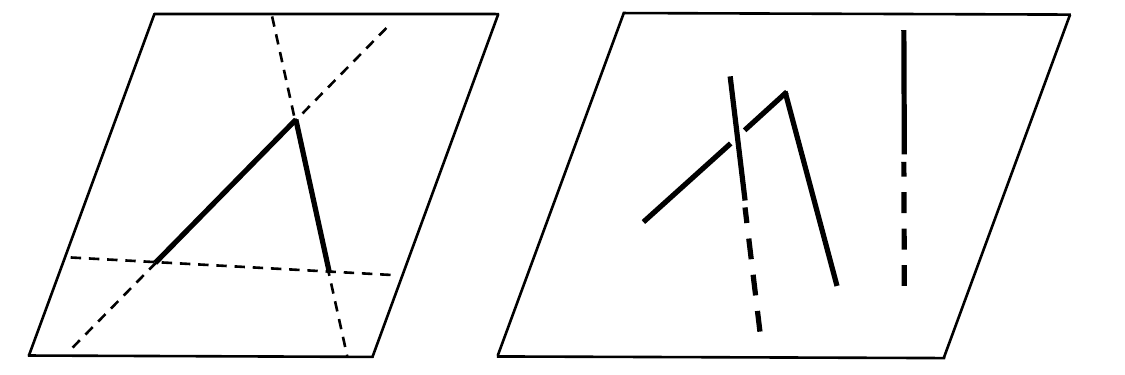}
\put(16,16){$e_i$}
\put(29,15){$e_{i+1}$}
\put(16,25){$(*)$}
\put(5,6){$(*)$}
\put(21,13){$(*)$}
\put(30, 6){$(*)$}
\put(34,25){$(*)$}
\put(58,19){$e_i$}
\put(73,16){$e_{i+1}$}
\put(61,25){$e_j$}
\put(76,28){$e_{k}$}
\end{overpic}
\caption{The planes are spanned by edges $e_i,e_{i+1}$, On the left, there are trisecant lines intersecting $e_i, e_{i+1}$ if the third edge intersects any one of the five regions marked (*), otherwise there are no trisecants. On the right, the family of trisecants intersecting $e_i,e_{i+1}, e_j$ is homeomorphic to $[0,1]$, while the family intersecting $e_i,e_{i+1}, e_k$ is homeomorphic to $[0,1)$.}
\label{fig:planar-trisecants}
\end{center}
\end{figure}

Before moving on to the next case, we pause to remind the reader about some well known facts about {\em doubly ruled surfaces} (see for instance \cite{HC-V, PW, Otal}). A triple of pairwise skew lines $l_1, l_2, l_3$ determines a unique quadric, a doubly-ruled surface $H$ (see Figure~\ref{fig:doubly-ruled}). This is either a hyperbolic paraboloid, if the three lines are parallel to one plane, or a hyperboloid of one sheet, otherwise. Each point of $H$ lies on a unique line from each ruling. The lines  $l_1, l_2, l_3$ belong to one of the rulings of the surface, and every line intersecting all three lines belongs to the other. Thus a fourth line $l_4$ intersecting $H$ yields two (or one) quadrisecant line(s) intersecting $l_1, l_2, l_3$, and $l_4$. There are an infinite number of quadrisecants only when $l_4$ is contained in $H$.  

\begin{figure}[htbp]
\begin{center}
\begin{overpic}[scale=0.3]{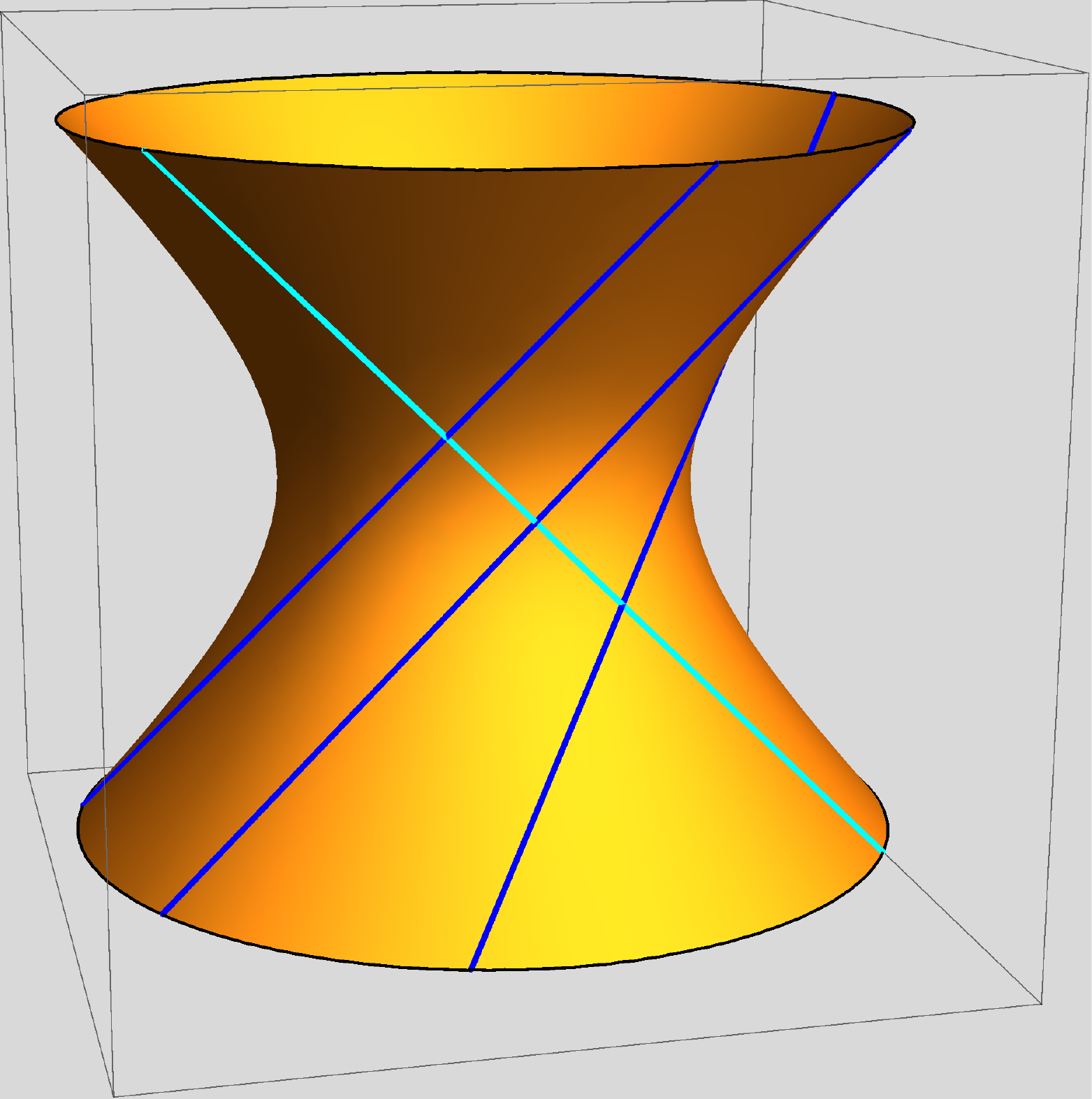}
\end{overpic}
\ \ \ 
\begin{overpic}[scale=0.3]{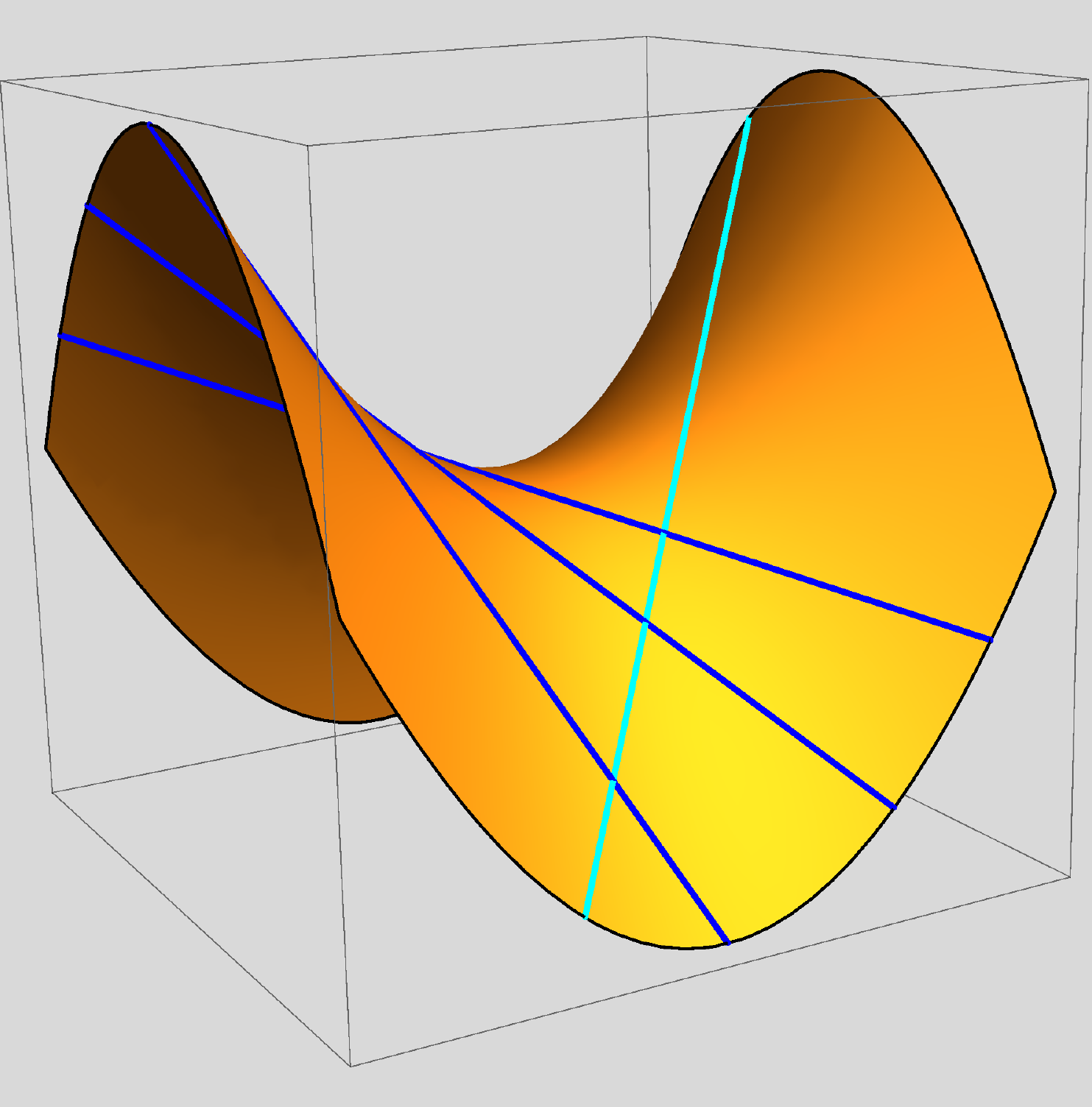}
\end{overpic}
\caption{Doubly ruled surfaces produced in {\em Mathematica}. On the left a hyperboloid of one sheet, on the right a hyperbolic paraboloid. The three blue lines lie in one ruling, the cyan line in the other.}
\label{fig:doubly-ruled}
\end{center}
\end{figure}

Returning our attention to generic polygonal knots, we see that a pairwise skew triple of edges $e_i,e_j,e_k$ either has 0 or 1 trisecant, or has a 1-parameter family of trisecants. A quadrisecant is formed when a fourth edge intersects the quadric in a trisecant line. There are 1 or 2 quadrisecants in this case, since the second condition in Definition~\ref{def:generic} means that there cannot be an infinite number of quadrisecants.

%%%%%%%%%%%%%%%%%%%%%%%%%%%%%%

\subsection{Structure of the set of trisecants.} 

Using the ideas from above, we can piece together the structure of the set of trisecants $\mathcal{T}\subset K^3$. Indeed this approach has been used to understand (and compute) $\mathcal{T}$ and the set of quadrisecants in many papers (such as \cite{BWW, CC-RR, Denne, JP, VGKT-thesis}). We expect that generically, $\mathcal{T}$ is a 1-manifold.
 Of course, we can add more (generic) conditions to Definition~\ref{def:generic} to give even more control over $\mathcal{T}$ (for example those in \cite{Denne, VGKT-thesis}). We omit these details here, and also the many technical but straightforward details needed to prove the following two results.

\begin{proposition}[c.f. \cite{Denne, VGKT-thesis}]\label{prop:embedded}
Let $K$ be a nontrivial generic polygonal tame knot. In $K^3$,
$\overline{\mathcal{T}}$ is a compact 1-manifold with boundary,
embedded in $K^3$ in a piecewise smooth way with $\mathcal{T}\subset
K^3 \setm \tilde{\Delta}$ and
$\partial{\mathcal{T}}\subset{\Delta}$. Moreover, each component of $\mathcal{T}$ is either a simple closed curve or a simple open arc.
\end{proposition}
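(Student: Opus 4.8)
The plan is to analyze $\overline{\mathcal{T}}$ locally, using the case-by-case description of how trisecants arise in generic polygonal knots from the preceding subsections, and then to assemble these local pictures into the global manifold statement. Throughout, I would work in the configuration space $K^3$, whose topology is straightforward since $K$ is a polygon, and I would exploit the fact that a trisecant is determined by which three edges (or vertices, in degenerate cases) carry its three ordered points together with the collinearity constraint.

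First I would set up the local model. Fix a trisecant $abc\in\mathcal{T}$ with $a,b,c$ lying in the interiors of three edges $e_i, e_j, e_k$. The collinearity of three points, one moving freely in each edge, imposes two independent equations (say, that the three points are collinear, which cuts the codimension from the $3$-dimensional product of edge-interiors down to $1$). Here I would invoke the genericity hypotheses of Definition~\ref{def:generic}: the no-four-coplanar and quadric conditions guarantee the collinearity map has maximal rank, so near an interior-edge trisecant the solution set is a smooth $1$-manifold, and the doubly-ruled-surface analysis shows the local branch is an open arc parametrized, e.g., by the position of one of the three points. This handles $\mathcal{T}\subset K^3\setm\tilde\Delta$ being a smooth $1$-manifold on the interior-edge stratum. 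The piecewise-smooth structure appears precisely because the defining equations change when a point crosses a vertex of $K$, so I would check that branches from adjacent edge-triples match up continuously (sharing the vertex point) to give a continuous, piecewise-smooth curve.

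Next I would address the closure and boundary. A sequence of trisecants can degenerate in the limit only by two points colliding (so that the limit lands in $\tilde\Delta$) or by a point reaching an endpoint of the allowed parameter interval. The content of the statement $\partial\mathcal{T}\subset\Delta$ is that the genuine boundary points of the $1$-manifold occur exactly where two of the three points merge, i.e.\ where the trisecant degenerates to a secant. I would argue, again using genericity (in particular the absence of $n$-secants for $n\ge 5$ and the no-three-collinear-vertices condition), that the limiting behavior is controlled: the $[0,1]$ versus $[0,1)$ dichotomy from Figure~\ref{fig:planar-trisecants} tells me which endpoints are attained as honest boundary and which are open ends that must glue to another branch. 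Compactness of $\overline{\mathcal{T}}$ follows from compactness of $K^3$ together with the fact that $\overline{\mathcal{T}}$ is closed (the collinearity condition is a closed condition, so its solution set in the compact $K^3$ is compact).

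Finally, the global classification of components as simple closed curves or simple open arcs is the standard classification of connected compact $1$-manifolds with boundary, once I know $\overline{\mathcal{T}}$ is a compact $1$-manifold with boundary whose boundary lies in $\Delta$: each component is homeomorphic either to $S^1$ (no boundary, a simple closed curve) or to $[0,1]$ (two boundary points, a simple open arc with endpoints in $\Delta$). I expect the main obstacle to be the boundary and gluing analysis rather than the generic-smoothness of the interior: precisely, verifying that at every degeneration the limit is either a clean boundary point in $\Delta$ or a transverse passage through a vertex where two smooth branches glue into one piecewise-smooth arc, with no wild accumulation or higher-order crossing. This is exactly where the genericity conditions must be used in full, and it is the source of the ``many technical but straightforward details'' the author chooses to omit; my proposal is to organize these details by stratifying $K^3$ according to which points sit on vertices versus edge-interiors and checking the local model on each stratum and its frontier.
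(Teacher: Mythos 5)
Your proposal follows essentially the same route the paper itself sketches (and defers to \cite{Denne, VGKT-thesis}): stratify $K^3$ by edge-triples, use the planar (adjacent-edge) local model and the doubly-ruled-quadric (skew-triple) local model to see that the trisecants through each stratum form smooth arcs, invoke the genericity conditions to control degenerations, and finish with compactness plus the classification of compact connected $1$-manifolds. One sentence in your second paragraph, however, has the key dichotomy backwards: the attained endpoints of a $[0,1]$-type family --- genuine trisecants with one point at a vertex of $K$ --- are the \emph{gluing} points where two smooth branches from adjacent edge-triples meet (these are interior points of $\overline{\mathcal{T}}$ and the source of the merely piecewise-smooth structure), whereas the open ends of the $[0,1)$-type families are the ones whose limits, where two of the three points collide at a common vertex, land in $\Delta$ and constitute the manifold boundary; if it were as you wrote (attained endpoints serving as honest boundary), you would conclude $\partial\overline{\mathcal{T}}\not\subset\Delta$, contradicting the statement being proved. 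Since your final paragraph states the degeneration dichotomy correctly (``either a clean boundary point in $\Delta$ or a transverse passage through a vertex where two smooth branches glue''), this appears to be a slip rather than a misunderstanding, but it is precisely the point on which the whole boundary claim rests, so it should be stated consistently and verified in the stratum-by-stratum analysis you outline.
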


\begin{proposition}[c.f. \cite{Denne}]\label{prop:allproj}
Let $K$ be a nontrivial generic polygonal tame knot. 
In $K^2$, the projection $\pi_{ij}$ ($i<j$ and
$i,j=1,2,3$) is a piecewise smooth immersion of $\mathcal{T}$ into the set of secants $S$,
and $T=\pi_{ij}(\mathcal{T})$ intersects itself (transversally) at
double points. 
\end{proposition}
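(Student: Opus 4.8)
The plan is to prove the three assertions in turn: that $\pi_{ij}$ restricted to $\mathcal{T}$ is a piecewise smooth immersion, that its self-intersections have multiplicity exactly two, and that these double points are transverse. Throughout I would use Proposition~\ref{prop:embedded}, which guarantees that on each of its smooth arcs $\mathcal{T}$ is an embedded $1$-manifold and hence has nonvanishing tangent. Since the three points of a trisecant are collinear, any two of them determine both the secant line and the third point; thus $\pi_{ij}$ really does carry $\mathcal{T}$ into the set of secants $S=K^2\setm\tilde\Delta$, and the forgotten coordinate is a smooth function of the retained two wherever the trisecant line meets the relevant edge transversally.

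First I would settle the immersion claim. Fix a smooth arc of $\mathcal{T}$ and write its three points as $p_1(t),p_2(t),p_3(t)$; each $p_m$ runs along a fixed edge, so $\dot p_m$ is a scalar multiple of that edge's direction. Suppose $d\pi_{ij}$ failed to be injective at some $t_0$, say for $\pi_{12}$, so that $\dot p_1(t_0)=\dot p_2(t_0)=0$. Viewing the trisecant line as $L=p_1\wedge p_2$ in Pl\"ucker coordinates gives $\dot L(t_0)=\dot p_1\wedge p_2+p_1\wedge\dot p_2=0$, so $L$ is instantaneously stationary. As $p_3$ is the transverse intersection of $L$ with a fixed edge (the line $L$ is not parallel to, nor equal to, that edge), $p_3$ depends smoothly on $L$, whence $\dot p_3(t_0)=0$ as well. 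Then the whole tangent $(\dot p_1,\dot p_2,\dot p_3)$ vanishes, contradicting Proposition~\ref{prop:embedded}. The identical argument --- any two stationary points force the line, hence the third point, to be stationary --- handles $\pi_{13}$ and $\pi_{23}$, so each $\pi_{ij}$ is a piecewise smooth immersion.

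Next I would identify the self-intersections of $T=\pi_{ij}(\mathcal{T})$. A point of $T$ of multiplicity $m$ is the image of $m$ distinct trisecants sharing the two retained coordinates; for $\pi_{12}$ these are $(a,b,c_1),\dots,(a,b,c_m)$ with the $c_k$ distinct points of the line $\overline{ab}$. Then $a,b,c_1,\dots,c_m$ are $m+2$ distinct collinear points of $K$, that is, an $(m+2)$-secant. The third bullet of Definition~\ref{def:generic} forbids $n$-secants with $n\ge 5$, so $m+2\le 4$ and hence $m\le 2$: every self-intersection is a double point, arising from a quadrisecant whose two image branches are the sub-trisecants $abc$ and $abd$ sharing $a,b$. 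These come from two distinct points of the embedded $\mathcal{T}$, so they are genuinely different local branches, and since generic quadrisecants are discrete the double points are finite in number and isolated. The projections $\pi_{13},\pi_{23}$ are identical after relabelling.

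Finally --- and this is the step I expect to be the main obstacle --- I would establish transversality at each double point. Locally identify the $2$-parameter space of lines meeting the edges $E_a,E_b$ carrying the retained points with an open set of $K^2$ via $(a,b)\mapsto\overline{ab}$. The branch through $abc$ is the curve $\Gamma_c=\{(a,b):\overline{ab}\text{ meets }E_c\}$, cut out by the coplanarity condition $f_c(a,b)=\det[\,b-a,\ \hat u_c,\ q_c-a\,]=0$ with $\hat u_c$ the direction of $E_c$ and $q_c\in E_c$, and the branch through $abd$ is $\Gamma_d=\{f_d=0\}$. Transversality of the double point is exactly the nonvanishing of the Jacobian $\partial(f_c,f_d)/\partial(s,r)$ in the edge parameters $s,r$ of $E_a,E_b$, equivalently the linear independence of $\nabla f_c$ and $\nabla f_d$. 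Parametrizing $L$ by $a+tw$ with $w=b-a$ and taking the base points $c=a+\lambda_c w$, $d=a+\lambda_d w$, a short computation gives
\[
\nabla f_c=\bigl((1-\lambda_c)\det[\hat u_a,\hat u_c,w],\ \lambda_c\det[\hat u_b,\hat u_c,w]\bigr),
\]
and similarly for $\nabla f_d$, so non-transversality is a single polynomial relation among the edge directions and the positions $\lambda_c,\lambda_d$. The geometric content is that $\Gamma_c$ is the $(a,b)$-trace of the opposite ruling of the quadric $H=H(E_a,E_b,E_c)$; I would try to show that tangency of $\Gamma_c$ and $\Gamma_d$ at $L$ forces $E_d$ to be degenerate with respect to $H$ (in the limiting case, to lie on $H$), which the second bullet of Definition~\ref{def:generic} excludes. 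Absent a clean such implication, the honest route is that non-transversality is a codimension-one condition, and --- as the discussion after Definition~\ref{def:generic} permits --- one simply adjoins its negation to the list of generic hypotheses, an elementary parametric transversality argument showing the enlarged condition still holds on a dense set of polygonal knots. The adjacent-edge configurations (where $E_a,E_b$ meet at a vertex) produce the same defining equations and the same reduction, so they need no separate treatment.
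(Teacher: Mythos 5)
First, a caveat about the comparison: the paper never writes out a proof of this proposition --- it explicitly omits ``the many technical but straightforward details'' and defers to \cite{Denne, VGKT-thesis}, supplying only the local models (planar trisecant families for adjacent edges, doubly ruled quadrics for pairwise skew triples) and the remark that further generic conditions may be adjoined to Definition~\ref{def:generic}. Measured against that framework, your reconstruction is the intended one, and its first two parts are sound. The observation that two stationary points force the trisecant line, hence the third point, to be stationary correctly reduces the immersion claim to the nonvanishing tangent guaranteed by Proposition~\ref{prop:embedded}, and it covers the adjacent-edge planar families (where the third point is constant along the family) with no extra work. Likewise, bounding the multiplicity of self-intersections of $T$ by the no-$5$-secant bullet is exactly the right mechanism: multiplicity $m$ produces $m+2$ distinct collinear points of $K$, so $m\le 2$.

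The genuine gap is in the step you yourself flagged: transversality. The implication you hoped for --- tangency of $\Gamma_c$ and $\Gamma_d$ at the quadrisecant line forces $E_d$ to be degenerate with respect to $H=H(E_a,E_b,E_c)$ --- is false, and your own gradient formula shows it. Fix the point $d_0=a+\lambda_d w$ on $L_0$ and let the direction $\hat u_d$ of $E_d$ vary: the parallelism condition
\[
(1-\lambda_c)\det[\hat u_a,\hat u_c,w]\;\lambda_d\det[\hat u_b,\hat u_d,w]
=(1-\lambda_d)\det[\hat u_a,\hat u_d,w]\;\lambda_c\det[\hat u_b,\hat u_c,w]
\]
is a single homogeneous equation in $\hat u_d$, solved by a one-parameter family of directions, whereas only the two ruling lines of $H$ through $d_0$ lie on $H$. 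So tangency can occur with $E_d$ meeting $H$ perfectly transversally, and consequently transversality of double points \emph{cannot} be derived from the three bullets of Definition~\ref{def:generic} as stated. Your fallback --- adjoin the nonvanishing of the Jacobian as an additional generic condition and prove by perturbing vertices that it holds on a dense set of polygonal knots --- is therefore not merely the ``honest'' route but the only one, and it is what the cited theses in effect do and what the paper's remark about adding more generic conditions is licensing. As written, however, that density argument is asserted rather than carried out, so to make the proof complete you would need to supply the (standard) parametric transversality argument; until then the key clause of the proposition, ``intersects itself transversally,'' rests on an announced but unproved step.
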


The astute reader will have realized that we really need to work with {\em essential} trisecants if we are to find {\em essential} (alternating) quadrisecants. Recall that the set of essential secants $ES$ is a subset of $S$.  In order to find essential alternating quadrisecants, we restrict our attention to the trisecants $abc$ which are essential in the second segment $bc$.  Thus $\mathcal{ET}=\pi^{-1}_{23}(ES)\cap  \mathcal{T}$, and we define $\mathcal{ET}^d$ and $\mathcal{ET}^r$ to be the sets of essential trisecants of direct and reversed orderings in $K^3$. Since we are interested in finding essential trisecants which share the first two points, we project  $\mathcal{ET}$ to $S$ in a particular way.

\begin{definition}[\cite{Denne}]  Let $ET=\pi_{12}(\mathcal{ET})$ be the projection of the set of
  essential trisecants to the set of secants $S$ and similarly define
  $ET^d:=\pi_{12}(\mathcal{ET}^d)$ and $ET^r:=\pi_{12}(\mathcal{ET}^r)$.  
\end{definition}

We can easily prove that a version of Proposition~\ref{prop:allproj} holds for $ET$: For a nontrivial generic polygonal tame knot, $ET$ is a piecewise immersed 1-manifold which intersects itself transversally at double points (\cite{Denne}). Finally, we observe that (essential) alternating quadrisecants occur precisely when direct and reversed (essential) trisecants intersect in $S$.

\begin{lemma}[\cite{Denne}] \label{lem:essquadkey} Let $ab\in ET^d\cap ET^r$ in $S$. This
  means that there exists $c$ and~$d$ such that $abc\in \mathcal{ET}^r$ and
  $abd\in \mathcal{ET}^d$. Then either $abcd$ or $abdc$ is an essential
  alternating quadrisecant. \qed
\end{lemma}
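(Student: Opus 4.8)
The plan is to observe that the two given trisecants force $a,b,c,d$ onto a single line, then to decode the words \emph{direct} and \emph{reversed} into the cyclic order of the four points along $K$, and finally to compare that order with the order along the line so as to name the quadrisecant and locate the one secant whose essentiality must be checked. To begin, both $abc$ and $abd$ are trisecants through the two points $a$ and $b$, and two distinct points span a unique line; hence $c$ and $d$ lie on the line $\overline{ab}$, so $\{a,b,c,d\}$ is a quadrisecant line. That the four points are genuinely distinct (and that no two lie on a common straight subarc) will drop out of the ordering analysis below.

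Next I would read off the order along $K$. Because $abc$ is \emph{reversed}, the cyclic order of its three points along the oriented knot is $acb$, i.e.\ $c\in\arc{ab}$; because $abd$ is \emph{direct}, the cyclic order of $a,b,d$ is $abd$, i.e.\ $d\in\arc{ba}$. Since $\arc{ab}$ and $\arc{ba}$ meet only at $a$ and $b$, the points $c$ and $d$ are distinct and lie in disjoint open arcs, and the cyclic order of all four points along $K$ is $acbd$ (in particular $b\in\arc{ad}$, matching the standing convention).

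Now I would read off the order along the line. By definition of a trisecant the three points of $abc$ lie in order on the line, so $b$ is between $a$ and $c$; likewise $b$ is between $a$ and $d$. Thus $c$ and $d$ both lie on the far side of $b$ from $a$, and the only freedom is their relative position, giving line order either $abcd$ or $abdc$ --- which is exactly the dichotomy in the statement. It then remains to identify the type and the essential secant in each case. Comparing either line order against the knot order $acbd$ shows (after relabelling the points by their position along the line in the second case) that the dihedral class is $acbd$ in both cases, so the quadrisecant is \emph{alternating}. For an alternating quadrisecant only the middle secant --- the segment joining the second and third points along the line --- need be essential. When the line order is $abcd$ this middle secant is $bc$, essential because $abc\in\mathcal{ET}^r$ is essential in its second segment; when the line order is $abdc$ the middle secant is $bd$, essential because $abd\in\mathcal{ET}^d$ is essential in its second segment. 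In either case we obtain an essential alternating quadrisecant.

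The only delicate point is the bookkeeping in this last step: in the $abdc$ case one must verify that relabelling the points by their order along the line simultaneously carries the knot order $acbd$ to the alternating representative and carries the segment already known to be essential --- the second segment $bd$ of the direct trisecant $abd$ --- onto the middle secant of the relabelled quadrisecant. Everything else follows directly from the definitions of trisecant type and of an essential $n$-secant.
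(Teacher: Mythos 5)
Your proof is correct, and it fills in exactly the definitional bookkeeping that is intended here: the paper itself states this lemma without proof (it is quoted from \cite{Denne} with an immediate \qed), and the argument in that source is the same unwinding of definitions you give --- the shared secant $ab$ forces all four points onto one line with $c,d$ beyond $b$, the reversed/direct conditions place $c\in\arc{ab}$ and $d\in\arc{ba}$ so the knot order is $acbd$, and in either line order the middle secant ($bc$ or $bd$) is the second segment of the corresponding essential trisecant. Your parenthetical claim that distinctness and the no-common-straight-subarc condition ``drop out'' is also sound, since any straight subarc containing both $c$ and $d$ would have to contain $a$ or $b$ (by the cyclic order $acbd$), contradicting the trisecant hypothesis on $abc$ or $abd$.
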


The proof of the existence of alternating quadrisecants, Theorem~\ref{thm-quad}, follows the same pattern as the proof of Lemma~\ref{lem:tri1}. We proceed by contradiction, and assume that  $ET^d\cap ET^r=\emptyset$ in $S$. We then show that $K$ has to be unknotted, a contradiction.
The details of proving $K$ unknotted are quite involved and so we omit them here. [However, very briefly,  since $K$ is a polygonal knot, we can show that $ET^d$, respectively $ET^r$, stay at least the minimum edge length away from the bottom, respectively top, boundary of $S$. Since $ET^d$ and $ET^r$ do not intersect, we use a Meyer-Vietoris argument to construct a loop winding once around $S$ which avoids the set of essential trisecants. We then use this loop to find a spanning disk whose boundary is the knot. After further technicalities, the loop theorem is invoked to show this disk is embedded.]

%%%%%%%%%%%%%%%%%%%%%%%%%%%%%%
%%%%%%%%%%%%%%%%%%%%%%%%%%%%%%%

\section{Applications of essential secants and quadrisecants} \label{section:applications}

%The existence of essential secants and (essential alternating) quadrisecants gives new insights into the geometry of tame knots.

\subsection{Total curvature}

For smooth closed curves, the total curvature can be thought of as the
total angle through which the unit tangent vector turns (or the length
of the tangent indicatrix). J. Milnor \cite{Mil} defined the {\em total curvature} $\kappa(\gamma)$ of an arbitrary closed curve $\gamma$ to be the supremal total curvature of inscribed polygons (where the total curvature is the sum of the exterior angles), and showed the two definitions are equivalent.  In 1929, W. Fenchel \cite{Fen} proved that the total curvature of a closed curve in $\R^3$ is greater than or equal
to $2\pi$, equality holding only for plane convex curves. In 1947, K. Borsuk \cite{Borsuk}
extended this result to $\R^n$ and conjectured the following.

\begin{theorem}
A nontrivial tame knot in $\R^3$ has total curvature greater than $4\pi$.
\end{theorem}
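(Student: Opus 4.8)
The plan is to reduce the theorem to an integral-geometric count of maxima of linear height functions, and to feed the ``at least two maxima in (almost) every direction'' input from the knottedness of $K$. First I would dispose of the case $\kappa(K)=\infty$, which trivially exceeds $4\pi$, and assume henceforth that $K$ has finite total curvature. For a unit vector $v\in S^2$ write $h_v(x)=\langle x,v\rangle$ for the associated height function on $K$, and let $\mu(v)$ be the number of local maxima of $h_v|_K$; for almost every $v$ this is finite and equals the number of local minima. The engine is Milnor's Crofton-type formula
\[ \kappa(K)=\tfrac12\int_{S^2}\mu(v)\,dA(v), \]
valid for curves of finite total curvature (proved via polygonal approximation, which also yields the a.e.\ finiteness of $\mu$). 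Since $\mathrm{Area}(S^2)=4\pi$, the bound $\mu(v)\ge 2$ for almost every $v$ already gives $\kappa(K)\ge 4\pi$, and the whole theorem reduces to establishing this pointwise bound and then upgrading it to a strict inequality.

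The key step is $\mu(v)\ge 2$, where knottedness enters, and I would prove it via the contrapositive of the following lemma: if $\mu(v)=1$ for some regular direction $v$, then $K$ is unknotted. Indeed, a height function with a single maximum and single minimum cuts $K$ into two height-monotone arcs, so each intermediate level plane meets $K$ in exactly two points; joining these two points by a horizontal segment as the level varies sweeps out a disk bounded by $K$, and monotonicity makes it embedded, so $K$ is unknotted. As $K$ is nontrivial, $\mu(v)\ge 2$ for almost every $v$. This is precisely the mechanism behind Hopf's original observation via quadrisecants: by Theorem~\ref{thm-quad} a nontrivial knot of finite total curvature carries an essential alternating quadrisecant $abcd$, and in the direction $v_0$ of its line the four points occur along $K$ in the alternating cyclic order $a,c,b,d$; the arc $\arc{ac}$ followed by $\arc{cb}$ and the arc $\arc{bd}$ followed by $\arc{da}$ each rise strictly above both their endpoints, so each contains an interior local maximum, giving a concrete witness that $\mu(v_0)\ge 2$. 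Equivalently, the second-hull viewpoint supplies the bound in \emph{every} direction at once: a point $p$ in the (nonempty) second hull of a nontrivial knot has every plane through it meeting $K$ at least four times, so $h_v|_K$ crosses the level $h_v(p)$ at least four times and thus has at least two maxima for all $v$.

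The main obstacle is the strict inequality. Equality $\kappa(K)=4\pi$ would force $\mu(v)=2$ for almost every $v$, and I would rule this out by strengthening the disk construction to the two-maxima case: with exactly two maxima and two minima the intermediate level sets have a controlled combinatorial structure, and one shows that $\mu\equiv 2$ almost everywhere lets the same monotone surgery produce a spanning disk (equivalently, forces bridge number one), contradicting nontriviality. The non-strict bound $\kappa\ge 4\pi$ is comparatively soft once the lemma above is in hand; essentially all the remaining work lies in this equality analysis, together with the technical justification of the Crofton formula and the a.e.\ finiteness of $\mu$ for merely finite-total-curvature, possibly non-smooth, curves. Combining the two parts gives $\kappa(K)>4\pi$ for every nontrivial tame knot.
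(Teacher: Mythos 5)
Your route is genuinely different from the paper's. The paper's proof is nearly immediate once its Theorem~\ref{thm-quad} is in hand: an alternating quadrisecant $abcd$ gives an inscribed polygon (vertices in knot order $a,c,b,d$, all on one line) whose exterior angle at each vertex is exactly $\pi$, hence with total curvature exactly $4\pi$; since Milnor defines $\kappa(K)$ as the supremum of total curvature over inscribed polygons, $\kappa(K)\geq 4\pi$ follows by definition, and strictness is extracted from the fact that a knot is not coplanar. You instead reprove the theorem by Milnor's original integral-geometric method: the Crofton-type formula $\kappa(K)=\tfrac12\int_{S^2}\mu(v)\,dA(v)$ together with the crookedness lemma that a regular direction with a single local maximum yields a monotone, embedded spanning disk and hence an unknot. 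Your proof of the non-strict bound $\kappa(K)\geq 4\pi$ along these lines is correct (and in fact needs neither quadrisecants nor the second hull; those remarks of yours are consistent but redundant, since the contrapositive lemma already gives $\mu(v)\geq 2$ for a.e.\ $v$).

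The genuine gap is your treatment of the strict inequality, which is precisely the hard part of the theorem by this method. You claim that if $\mu(v)=2$ for almost every $v$, then ``the same monotone surgery'' produces a spanning disk, ``equivalently, forces bridge number one.'' That mechanism fails: for a single direction with exactly two maxima and two minima, the level-by-level surgery does not produce an embedded disk --- if it did, every 2-bridge knot (e.g.\ the trefoil) would be unknotted. Moreover, the hypothesis $\mu\equiv 2$ a.e.\ gives bridge number at most $2$, not $1$, and you offer no argument for how the full-measure condition over \emph{all} directions could be assembled into a single disk construction or into the existence of some direction with one maximum; the phrase ``one shows that'' is carrying the entire burden. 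So the contradiction you need in the equality case is not established, and the equality case genuinely requires a different idea. (It is also where the paper's approach must do its only real work: adding a single point of $K$ off the quadrisecant line to the inscribed quadrilateral can leave the inscribed total curvature at exactly $4\pi$, which is why the paper invokes non-coplanarity --- one needs a further point off the resulting plane to force a strict increase.) As written, your proposal proves $\kappa(K)\geq 4\pi$ but not $\kappa(K)>4\pi$.
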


This result was first proved around 1949 by both Milnor \cite{Mil} and F\'ary \cite{Fary}.
It has since become known as the F\'ary-Milnor theorem.  In \cite{CKKS}, the theorem was proved using the existence of second hull for a knotted
curve (defined below). In~1998, the F\'ary-Milnor theorem was independently extended to knotted curves in Hadamard\footnote
{A Hadamard manifold is a complete simply-connected
Riemannian manifold with non-positive sectional curvature.}
manifolds by Schmitz \cite{Schm} and S. Alexander and R. Bishop \cite{AB}.  Interestingly, Alexander and Bishop show that a doubly covered bi-gon is inscribed in a polygonal curve inscribed in the nontrivial tame knot, an idea somewhat close to the midsegment of an alternating quadrisecant.

A new proof of the F\'ary-Milnor theorem is given by the existence of alternating quadrisecants for any nontrivial tame knot $K$. An alternating quadrisecant can be thought of as an inscribed polygon in $K$, and has total curvature $4\pi$. Thus by definition, $\kappa(K)\geq 4\pi$. To get a strict inequality, simply observe that a knot is not coplanar. 

More recently, H. Gerlach, P. Reiter and H. von der Mosel \cite{GRM} have written about {\em elastic knots}, which are limit configurations of energy minimizers of an energy consisting of the classic bending energy and a small multiple of ropelength (defined below). One of the many results in this paper is an extension of
the classic F\'ary-Milnor theorem on total curvature to the $C^1$-closure of the knot class. The proof of this result also relies on the existence of alternating quadrisecants.

%%%%%%%%%%%%%%%%%%%%%
\subsection{Second Hull}

The convex hull of a connected set $K$ in $\R^3$ is characterized by the fact that
every plane through a point in the hull must intersect $K$. If $K$ is a closed curve,
then a generic plane must intersect $K$ an even number of times. Thus every plane
through each point of the convex hull is cut by $K$ at least twice. In proving the total curvature result above, Milnor observed that for a nontrivial tame knot, there are planes in every direction which cut the knot four times. More generally, there
are points through which {\em every} plane cuts the knots four times. This idea was formalized by J. Cantarella, Kuperberg, R. Kusner and J.M. Sullivan \cite{CKKS}, where the authors defined these points as the {\em second hull} of a knot. 

\begin{definition}[\cite{CKKS}] Let $K$ be a closed curve in $R^3$. Its $n$th hull $h_n(K)$ is the set of points $p\in \R^3$ such that $K$ cuts every plane $P$ through $p$ at least $2n$-times.
\end{definition}

Cantarella \emph{et al.} proved that the second hull of
a nontrivial tame knot in $\mathbb{R}^3$ is nonempty. This paper conjectured
the existence of alternating quadrisecants for nontrivial tame knots in
$\mathbb{R}^3$ as another way of proving that the second hull is
nonempty. This is because the mid-segment $bc$ of alternating
quadrisecant $abcd$ is in the second hull.

%%%%%%%%%%%%%%%%%%%%%%%%%%%%%%%%
\subsection{Ropelength}

The ropelength problem asks to minimize the length of a
knotted curve subject to maintaining an embedded tube of fixed diameter
around the tube; this is a mathematical model of tying the knot tight
in a rope of fixed thickness.

More technically, the {\em thickness} $\tau(K)$ of a space curve $K$ is defined \cite{GM} to be
twice the infimal radius $r(a, b, c)$ of circles through any three distinct points of $K$. It
is known \cite{CKS} that $\tau(K) = 0$ unless $K$ is $C^{1,1}$ (meaning that its tangent direction
is a Lipschitz function of arclength). When $K$ is $C^1$, we can define normal tubes
around $K$, and then indeed $\tau(K)$ is the supremal diameter of such a tube that
remains embedded. We note that in the existing literature thickness is sometimes
defined to be the radius rather than diameter of this thick tube. Following others lead, we define the ropelength as follows.
\begin{definition} The {\em ropelength}  of a knot $K$ is the (scale-invariant) quotient of length over thickness, $Rop(K)=len(K)/\tau(K)$.
\end{definition}

Cantarella, Kusner and Sullivan \cite{CKS} proved that any (tame) knot or link type has a ropelength minimizer and gave certain lower bounds for the ropelength of links; these are sharp in certain simple cases where each component of the link is planar.  In this section we outline our joint work~\cite{DDS}, showing how essential alternating quadrisecants are used to prove that nontrivial knots have ropelength at least~\altval.  This is an improvement on the bound of 12 from \cite{Diao-24} and is greater than the conjectured bound of 15.25 from \cite{CKS}.

%%%%%%%%%%%%%%%%%%%%%%%
\subsubsection{Ropelength basics}

Because the ropelength problem is scale invariant, we find it most convenient
to rescale any knot $K$ to have thickness (at least) $1$. This implies that $K$ is a $C^{1,1}$
curve with curvature bounded above by $2$.  For any point $a\in\R^3$, let $B(a)$ denote the open unit ball centered at $a$.  We now give several well known results about the local structure of thick knots (see for instance \cite{Diao, CKS} and \cite{DDS}). Most proofs are elementary and we omit them here.

\begin{lemma} \label{lem:rope-basics} Let $K$ be a knot of unit thickness. If $a\in K$, then $B(a)$ contains
a single unknotted arc of $K$; this arc has length at most $\pi$ and is transverse to the
nested spheres centered at $a$. If $ab$ is a secant of $K$ with $|a-b| < 1$, then the ball of
diameter $ab$ intersects $K$ in a single unknotted arc (either $\arc{ab}$ or $\arc{ba}$) whose length is at most $\arcsin |a - b|$. \qed
\end{lemma}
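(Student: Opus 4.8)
The plan is to exploit the definition of thickness—$\tau(K)$ equals twice the infimal radius of circles through three points of $K$—to control curvature, and then to turn those curvature bounds into length and geometry estimates inside unit-radius balls. Since $K$ has thickness at least $1$, any circle through three points of $K$ has radius at least $\tfrac12$; letting two of the points approach the third along $K$ shows that the osculating circle at every point has radius at least $\tfrac12$, so the curvature satisfies $\kappa \le 2$ wherever defined. This is the curvature bound quoted in the statement, and it underlies everything else.

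\medskip

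\emph{First claim (arc in $B(a)$).} I would first argue that $K \cap B(a)$ is a single arc. If $K$ entered and left $B(a)$ more than once, two distinct subarcs would pass through the ball of radius $1$ about $a$; one can then produce a short circle (a near-diameter of the ball through points on the two strands, together with $a$) of radius well below $\tfrac12$, contradicting thickness—this is essentially the standard ``doubly critical self-distance'' estimate for thick curves, so I would cite the local structure results of \cite{Diao, CKS} rather than reprove it. For the length bound, I would parametrize the arc by arclength and note that, because curvature is at most $2$, the unit tangent vector turns at rate at most $2$. The arc is transverse to the nested spheres centered at $a$ precisely because a point of tangency to a sphere $\{|x-a|=r\}$ with $r<1$ would force the curve to turn back, again producing a circle through nearby points of radius $<\tfrac12$. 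Transversality to the spheres means the radial distance $|x(s)-a|$ is strictly monotone away from the closest point, so the arc crosses each sphere once on each side; a comparison with a curve of constant curvature $2$ (a circle of radius $\tfrac12$, whose full length is $\pi$) then bounds the length inside $B(a)$ by $\pi$. The unknottedness of the arc follows since a single monotone (transverse) arc in a ball cannot be knotted.

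\medskip

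\emph{Second claim (secant $ab$ with $|a-b|<1$).} Here I would consider the ball $D$ of diameter $\overline{ab}$, so $D$ has radius $\tfrac12|a-b| < \tfrac12$. Since every subarc of $K$ has osculating circles of radius at least $\tfrac12 > \mathrm{rad}(D)$, no subarc of $K$ can enter and leave $D$ while bending enough to stay inside; a curvature comparison shows $K \cap D$ consists of a single unknotted arc joining $a$ to $b$ (one of $\arc{ab}$, $\arc{ba}$). For the length bound $\arcsin|a-b|$, I would again compare against the extremal configuration: among arcs of curvature at most $2$ joining two points at distance $|a-b|$ and staying in $D$, the longest is a circular arc of radius $\tfrac12$, whose chord $|a-b|$ subtends a half-angle of $\arcsin|a-b|$, giving arclength exactly $\arcsin|a-b|$. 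This is the sharp case, so the inequality is tight.

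\medskip

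\emph{Main obstacle.} The delicate point is not the curvature bound (which is immediate) but the passage from ``curvature at most $2$'' to the \emph{global} statement that $K$ meets each of these small balls in a \emph{single, unknotted} arc. A bound on curvature alone does not forbid a distant strand of $K$ from re-entering the ball; ruling this out genuinely requires the full strength of thickness via the infimal-radius-of-circles definition (controlling self-distance, not just local bending). I would therefore lean on the established thick-curve structure lemmas of \cite{Diao, CKS} for that separation statement and reserve the explicit comparison-geometry argument for the quantitative length bounds, where the extremal circle of radius $\tfrac12$ makes both $\pi$ and $\arcsin|a-b|$ transparent and sharp.
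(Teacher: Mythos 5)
Your overall skeleton is reasonable (curvature bound, comparison with the radius-$\tfrac12$ circle for the length bounds, full thickness for the separation statements), and for calibration note that the paper itself omits this proof entirely, stating the lemma with a \qed and pointing to \cite{Diao}, \cite{CKS}, \cite{DDS}; so deferring the hard structural step to those references is in itself legitimate. The problem is that the two mechanisms you actually propose for the ``single arc'' claims are wrong. (a) For $B(a)$ you suggest that two strands in the unit ball would give a circle through $a$ and a point on each strand ``of radius well below $\tfrac12$.'' This is false: three points in a unit ball need not lie on any small circle. A nearly collinear triple has arbitrarily large circumradius, and even well-spread triples fail (take $a$ at the center and two points on $\partial B(a)$ in orthogonal directions: the circumradius is $\sqrt{2}/2 > \tfrac12$). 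So the mere presence of a second strand in $B(a)$ produces no forbidden triple. (b) For the ball $D$ of diameter $\overline{ab}$ you argue that no subarc can ``enter and leave $D$ while bending enough to stay inside.'' But a distant strand crossing $D$ need not bend at all --- a straight segment enters and leaves $D$ with zero curvature --- so no curvature comparison can exclude it. This is exactly the local-versus-global confusion you correctly flag in your final paragraph, yet the argument you give for the second claim is purely a curvature argument.

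The correct mechanism, which is what the cited proofs use, is the critical-point/tangent-circle argument --- and you essentially already have it, because it is your (correct) transversality argument. If a strand of $K$ other than the one through $a$ met $B(a)$, it would contain an interior point $x$ closest to $a$; there the chord $\overline{ax}$ is perpendicular to $K$, so the circle through $a$ and $x$ tangent to $K$ at $x$ has $\overline{ax}$ as a diameter, hence radius $|a-x|/2 < \tfrac12$. Since that circle is a limit of circles through three distinct points of $K$ (namely $x$, $x'$, $a$ with $x' \to x$ along $K$), unit thickness forces $|a-x| \ge 1$, a contradiction. Thus the same computation that gives transversality to the nested spheres also kills extra components of $K \cap B(a)$; your proposal misses this connection and instead reaches for the flawed circumcircle argument. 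A similar thickness-based (not curvature-based) argument is needed for $K \cap D$, where one also has to show the surviving arc actually joins $a$ to $b$. Your quantitative bounds are fine in spirit, but to make them rigorous you should invoke Schur's comparison explicitly: curvature $\le 2$ gives $|\gamma(s)-\gamma(0)| \ge \sin s$ for $s \le \pi/2$, which applied to each radially monotone half of $K \cap B(a)$ gives length $\le \pi/2 + \pi/2 = \pi$, and applied to the arc trapped in $D$ (where every point is within $|a-b|$ of $a$) gives length $\le \arcsin|a-b|$.
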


As an immediate corollary, we see that if $K$ has unit thickness, $a, b \in K$ and $p \in\arc{ab}$ with $a, b \notin B(p)$, then the complementary arc $\arc{ba}$ lies outside $B(p)$.

\begin{lemma}\label{lem:project} Let $K$ be a knot of unit thickness. If $a\in K$, then the radial
projection of $K\setm\{a\}$ to the unit sphere $\partial B(a)$ does not decrease length.   \qed
\end{lemma}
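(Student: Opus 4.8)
The plan is to prove the statement pointwise, by comparing the speed of the projected curve with the speed of $K$. Parametrize $K$ by arclength as $\gamma(s)$ with $\gamma(0)=a$ and $|\gamma'(s)|=1$, and for $s$ away from the parameter of $a$ set $r(s)=|\gamma(s)-a|$ and $u(s)=(\gamma(s)-a)/r(s)$, so the radial projection is $s\mapsto a+u(s)$ and the projected speed is $|u'(s)|$. Differentiating $\gamma=a+ru$ and using $u\cdot u'=0$ gives the orthogonal decomposition $\gamma'=r'u+ru'$, hence $1=|\gamma'|^2=r'^2+r^2|u'|^2$. Writing $\psi(s)$ for the angle at $\gamma(s)$ between the tangent $\gamma'(s)$ and the secant direction $u(s)$, one has $r'=u\cdot\gamma'=\cos\psi$, so the projected speed is exactly
\[
|u'|=\frac{\sqrt{1-r'^2}}{r}=\frac{\sin\psi}{r}.
\]
Thus the lemma reduces to the single pointwise inequality $\sin\psi\le r$, which yields $|u'|\le 1=|\gamma'|$ and hence the comparison of lengths after integrating in $s$.

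The key step is to extract $\sin\psi\le r$ from the thickness hypothesis. I would use the circumradius definition of thickness directly: since $\tau(K)\ge 1$, every circle through three distinct points of $K$ has radius at least $\tfrac12$. Letting two of those points slide toward $\gamma(s)$ while the third stays at $a$, the circumscribed circle converges to the circle $C$ through $a$ that is tangent to $K$ at $\gamma(s)$; by continuity its radius $\rho$ also satisfies $\rho\ge\tfrac12$. Now $\overline{ab}$ with $b=\gamma(s)$ is a chord of $C$ of length $r$, and the angle between this chord and the tangent line to $C$ at $\gamma(s)$ is precisely $\psi$. The chord--tangent relation for a circle of radius $\rho$ reads $r=2\rho\sin\psi$, so
\[
\sin\psi=\frac{r}{2\rho}\le\frac{r}{2\cdot\frac12}=r,
\]
which is exactly what is needed. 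Combining with the speed formula gives $|u'(s)|\le 1$ for every $s$, and integrating shows the projected image has length at most that of $K$ — the comparison that feeds the subsequent ropelength lower bounds.

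The main obstacle is this thickness-to-geometry step: one must check that the limiting tangent circle genuinely realizes the circumradius infimum, so that $\rho\ge\tfrac12$ is legitimate, and that the chord--tangent angle really is the same $\psi$ that appears in the speed formula $|u'|=\sin\psi/r$. A useful sanity check is the extremal round circle of thickness exactly $1$ (radius $\tfrac12$): there the tangent circle through $a$ is the curve itself, $\rho=\tfrac12$, and $\sin\psi=r$ holds with equality, so the projection preserves speed and length and the estimate is sharp. I would also note that no separate treatment of the arc inside $B(a)$ is required: because the bound $\sin\psi\le r$ is global, the pointwise inequality $|u'|\le|\gamma'|$ holds on all of $K\setm\{a\}$ at once, including where $r<1$.
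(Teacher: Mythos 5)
Your argument is correct, and it is essentially the standard proof of this fact: the survey omits the proof (it is one of the ``elementary'' lemmas quoted from \cite{DDS}), and the proof there is the same computation you give --- decompose the unit tangent into radial and spherical parts to get projected speed $\sin\psi/r$, then use the circumradius definition of thickness through the limiting circle at $a$ tangent to $K$ at $\gamma(s)$ and the chord--tangent relation $r=2\rho\sin\psi$ to conclude $\sin\psi\le r$. Your handling of the two side issues (that the tangent circle is a limit of circumcircles through triples of points of $K$, so $\rho\ge\tfrac12$ is legitimate, with the degenerate case $\rho=\infty$ giving $\sin\psi=0\le r$ trivially; and that $\sin\psi\le r$ holds for all $r$, trivially when $r\ge 1$, so no separate treatment inside $B(a)$ is needed) is also right; the only technicality worth a word is regularity, but unit thickness forces $K$ to be $C^{1,1}$, so $u$ is Lipschitz away from $a$ and your pointwise identity holds almost everywhere, which suffices for integrating speeds.

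One point deserves emphasis. What you prove is that the projection does not \emph{increase} length: $|u'|\le 1$, so the spherical image is no longer than the original arc. Read literally, the lemma's phrase ``does not decrease length'' asserts the opposite inequality, which is false for every unit-thickness knot --- any portion of $K$ at distance $r>1$ from $a$ projects with speed $\sin\psi/r<1$, and the nearly radial arc of $K$ through $a$ projects to an almost stationary curve on $\partial B(a)$. The wording is a slip, and the inequality actually used in this paper is exactly the one you established: compare the caption of Figure~\ref{fig:ess-sec} (``the projection \dots does not increase the length $\len{ab}$''), the deduction $\len{ab}\ge\angle apb$ in Lemma~\ref{lem:len-tri} (which needs $\len{ab}\ \ge$ length of the projected curve $\ge$ spherical distance), and the first case of the essential-arc bound, where $\len{ab}$ is bounded below by the length of the radial projection onto $\partial B(p)$. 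So you have proved the statement as intended and as applied, not the statement as literally worded.
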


\begin{lemma} \label{lem:len-tri}Suppose $K$ has unit thickness, and $p, a, b \in K$ with $p \notin \arc{ab}$. Let $\angle apb$ be the angle between the vectors $a - p$ and $b - p$. Then $\len{ab} \geq \angle apb$. In particular, if $apb$ is a reversed trisecant in $K$, then $\len{ab}\geq \pi$.  \qed
\end{lemma}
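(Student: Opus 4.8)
The plan is to estimate $\len{ab}$ from below by radially projecting the arc $\arc{ab}$ onto the unit sphere $\partial B(p)$ centered at $p$, and comparing the length of the projected curve with the angle it must sweep out. Write $\rho\colon\R^3\setm\{p\}\to\partial B(p)$, $\rho(x)=p+(x-p)/|x-p|$, for radial projection; this is defined on all of $\arc{ab}$ since $p\notin\arc{ab}$. The images $\rho(a)$ and $\rho(b)$ are exactly the unit vectors $(a-p)/|a-p|$ and $(b-p)/|b-p|$ based at $p$, so their spherical (geodesic) distance on $\partial B(p)$ is precisely $\angle apb$. Hence $\rho(\arc{ab})$ is a path on the sphere joining two points at geodesic distance $\angle apb$, and so has length at least $\angle apb$. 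The whole estimate then follows once I show that $\rho$ does not increase the length of $\arc{ab}$, for in that case $\len{ab}\ge\mathrm{length}\,\rho(\arc{ab})\ge\angle apb$.

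To see that $\rho$ is length non-increasing on $\arc{ab}$, I would first establish that $\arc{ab}$ lies entirely outside the open ball $B(p)$. On the exterior region $\{x:|x-p|\ge 1\}$ radial projection agrees with nearest-point projection onto the convex ball $\overline{B(p)}$, and nearest-point projection onto a convex set is $1$-Lipschitz; consequently $\rho$ is distance non-increasing, hence length non-increasing, on any curve contained in $\R^3\setm B(p)$. So the crux is the containment $\arc{ab}\subset\R^3\setm B(p)$. Here I would invoke the local structure of thick knots: by Lemma~\ref{lem:rope-basics}, $K\cap B(p)$ is a single unknotted subarc of $K$ through $p$, and since $p\in\arc{ba}$ this subarc stays within $\arc{ba}$ provided $a,b\notin B(p)$, so that it cannot spill over into the complementary arc. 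This is exactly the corollary to Lemma~\ref{lem:rope-basics}, which then yields $\arc{ab}\subset\R^3\setm B(p)$.

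The step I expect to be the main obstacle is precisely this containment, and in particular the borderline case in which an endpoint (say $a$) lies inside $B(p)$. Then the single arc $K\cap B(p)$ runs past $a$ into $\arc{ab}$, a short initial piece of $\arc{ab}$ dips into the ball where $\rho$ can stretch it, and the clean $1$-Lipschitz estimate fails on that piece. To handle this I would split $\arc{ab}$ at the first point $a'$ where it meets $\partial B(p)$, apply the projection estimate to the outer portion $\arc{a'b}$, and control the angle $\angle apa'$ swept by the inner portion using the transversality of $K$ to the spheres centered at $p$ together with the curvature bound $\le 2$ guaranteed by unit thickness; in the generic applications one in fact has $a,b\notin B(p)$ and this case does not arise. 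Finally, the stated special case is immediate: for a reversed trisecant $apb$ the point $p$ lies strictly between $a$ and $b$ on the secant line, so $a-p$ and $b-p$ are antiparallel and $\angle apb=\pi$, giving $\len{ab}\ge\pi$.
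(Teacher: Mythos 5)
Your main idea---project $\arc{ab}$ radially to $\partial B(p)$ and use the fact that the projections of $a$ and $b$ lie at spherical distance exactly $\angle apb$---is the same approach the paper intends (its Lemma~\ref{lem:project} is placed immediately before this lemma for exactly this purpose), and your treatment of the exterior region and of the reversed-trisecant conclusion is correct. But there is a genuine gap, sitting exactly where you flagged it: your length estimate for the projection is justified only on $\R^3\setm B(p)$, and the reduction to that case (via the corollary to Lemma~\ref{lem:rope-basics}) needs $a,b\notin B(p)$, which is not among the hypotheses. This borderline case cannot be waved away as one that ``does not arise'': the hypotheses (unit thickness, $p\notin\arc{ab}$) do not rule out $|a-p|<1$ even when $apb$ is a reversed trisecant, and in that case $K\cap B(p)$ is a single arc through $p$ containing $a$ in its interior, so an initial piece of $\arc{ab}$ genuinely lies inside the ball. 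A proof of the lemma as stated must cover this.

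Your sketched repair does not close the gap. After splitting at the first exit point $a'$, you still need $\len{aa'}\geq\angle apa'$, which is the same projection inequality on the inner piece, and transversality of $K$ to the nested spheres around $p$ is too weak to give it: transversality is qualitative, and a curve crossing the spheres at angle near $\pi/2$ at radius $r\ll 1$ has its projection stretched by a factor of roughly $1/r$. What is actually needed is the quantitative consequence of unit thickness: if $x$ lies on $K\cap B(p)$ at distance $r$ from $p$, then by the chord--arc and curvature bounds behind Lemma~\ref{lem:rope-basics} the tangent at $x$ is nearly radial, so that the component of the unit tangent orthogonal to $x-p$ has norm at most $r$ (with equality for the circle of radius $\tfrac12$); this is what makes the radial projection $1$-Lipschitz on the inner arc as well. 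That estimate is precisely the content of the paper's Lemma~\ref{lem:project}, which is stated for all of $K\setm\{p\}$ so that no containment $\arc{ab}\cap B(p)=\emptyset$ is ever needed. (Its wording ``does not decrease length'' is a slip: the statement actually intended and used---see the caption of Figure~\ref{fig:ess-sec} and the proof sketch for essential arcs---is that the projection does not \emph{increase} length.) With that lemma in hand, the proof is one line: $\arc{ab}\subset K\setm\{p\}$, its projection has length at most $\len{ab}$ and joins points at spherical distance $\angle apb$, so $\len{ab}\geq\angle apb$; for a reversed trisecant $apb$ one has $p\notin\arc{ab}$ and $\angle apb=\pi$. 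Either invoke Lemma~\ref{lem:project} directly, or prove the tangential estimate above; without one of these, your argument does not establish the full statement.
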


%This last lemma immediately gives simple lower bounds on ropelength for any knot with a quadrisecant. Simple, flipped and alternating quadrisecants have different numbers of reversed trisecants. The ropelength of a knot with a simple, flipped or alternating
%quadrisecant is at least $\pi$, $2\pi$ or $3\pi$, respectively.

Given a thick knot K with quadrisecant abcd, we can bound its ropelength in
terms of the distances along the quadrisecant line. Whenever we discuss such a
quadrisecant, we will abbreviate these three distances as $r := |a - b|$, $s := |b - c|$
and $t := |c - d|$. We start with some lower bounds for $r$, $s$ and $t$ for alternating quadrisecants. 

\begin{lemma} [\cite{DDS} Lemma 4.2] If $abcd$ is an alternating quadrisecant for a knot of unit thickness, then $r\geq 1$ and $t \geq 1$. With the usual orientation, the entire arc $\arc{da}$ thus lies outside $B(b)\cup B(c)$. If $s\geq 1$ as well, then $\arc{ac}$ lies outside $B(b)$ and $\arc{bd}$ lies outside $B(c)$.  \qed
\end{lemma}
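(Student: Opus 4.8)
The plan is to read off the combinatorics first and then feed it into the two consequences of Lemma~\ref{lem:rope-basics}. With the usual orientation the cyclic order of the four points along $K$ is $acbd$, while their order along the quadrisecant line is $a,b,c,d$; consequently $c\in\arc{ab}$ and $d\in\arc{ba}$, and likewise $b\in\arc{cd}$ and $a\in\arc{dc}$. I will also use repeatedly that, because the points sit in the order $a,b,c,d$ on the line, one has $|a-c|=r+s$ and $|b-d|=s+t$, and that every marked point other than $a,b$ lies on that line strictly outside the segment $\overline{ab}$ (and symmetrically for $\overline{cd}$).

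For the bound $r\geq 1$ I would argue by contradiction. If $r=|a-b|<1$, the secant part of Lemma~\ref{lem:rope-basics} says the ball of diameter $\overline{ab}$ meets $K$ in a single arc, which is either $\arc{ab}$ or $\arc{ba}$, contained entirely in that ball. But $\arc{ab}$ contains $c$ and $\arc{ba}$ contains $d$, and both $c$ and $d$ lie on the quadrisecant line strictly beyond $\overline{ab}$, hence outside the ball of diameter $\overline{ab}$ (whose intersection with the line is exactly $\overline{ab}$). Either alternative therefore forces a point of the captured arc to lie outside the ball, a contradiction; so $r\geq 1$. Running the identical argument on the secant $cd$ (using $b\in\arc{cd}$ and $a\in\arc{dc}$) gives $t\geq 1$.

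The remaining assertions follow from the corollary to Lemma~\ref{lem:rope-basics}, which says that if $p\in\arc{xy}$ and $x,y\notin B(p)$ then the complementary arc $\arc{yx}$ avoids $B(p)$. For $\arc{da}\subset K\setm B(b)$, apply this with $p=b$ and $\{x,y\}=\{a,d\}$: here $b\in\arc{ad}$, $|a-b|=r\geq 1$ and $|d-b|=s+t\geq t\geq 1$, so $a,d\notin B(b)$ and the complementary arc $\arc{da}$ lies outside $B(b)$. The same pair with $p=c$ works because $c\in\arc{ad}$, $|a-c|=r+s\geq 1$ and $|d-c|=t\geq 1$. When in addition $s\geq 1$, taking $p=b$ with $b\in\arc{ca}$, $|c-b|=s\geq 1$, $|a-b|=r\geq 1$ puts $\arc{ac}$ outside $B(b)$, and taking $p=c$ with $c\in\arc{db}$, $|d-c|=t\geq 1$, $|b-c|=s\geq 1$ puts $\arc{bd}$ outside $B(c)$.

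I expect the only genuinely substantive step to be $r\geq 1$ (equivalently $t\geq 1$); everything else is bookkeeping of the cyclic versus linear orderings. The subtlety there is that the single-arc alternative of Lemma~\ref{lem:rope-basics} could a priori capture \emph{either} complementary arc, so the argument must produce a collinear obstruction on both $\arc{ab}$ and $\arc{ba}$ simultaneously. This is exactly what the order $acbd$ guarantees, and is precisely where the hypothesis that the quadrisecant is \emph{alternating} (rather than simple or flipped) enters.
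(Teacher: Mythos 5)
Your proof is correct and is essentially the intended argument: the survey itself omits the proof (deferring to \cite{DDS}, Lemma 4.2), but the tools it sets up immediately beforehand --- the diameter-ball statement in Lemma~\ref{lem:rope-basics} and its corollary about complementary arcs avoiding $B(p)$ --- are exactly what you use, applied in exactly this way, so there is nothing to add. Your closing remark is also the right diagnosis: the cyclic order $acbd$ forces \emph{both} arcs $\arc{ab}$ and $\arc{ba}$ to contain a point ($c$ or $d$) lying on the quadrisecant line strictly beyond the segment $\overline{ab}$, and this is precisely where the alternating hypothesis enters (for simple or flipped quadrisecants one of the two arcs contains no obstruction, which is why those cases require essentiality to get length bounds).
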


As suggested by the discussion above, we will often find ourselves in the situation
where we have an arc of a knot known to stay outside a unit ball. We
can compute exactly the minimum length of such an arc in terms of the following
functions.
\begin{definition} For $r\geq 1$, let $f(r) := \sqrt{r^2 - 1} + \arcsin(1/r)$. For $r, s \geq 1$ and
$\theta \in [0, \pi]$, the minimum length function is defined by
$$m(r, s, \theta) =
\begin{cases} \sqrt{r^2 + s^2 - 2rs \cos\theta} &  \text{ if $\theta\leq \arccos(1/r) + \arccos(1/s)$} \\
f(r) + f(s) + (\theta - \pi)  & \text{if $\theta \geq \arccos(1/r) + \arccos(1/s)$}
\end{cases}
$$
\end{definition}

\begin{figure}
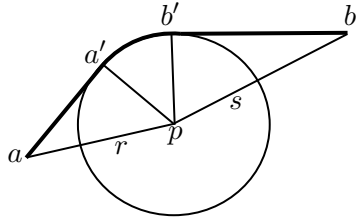

\begin{center}
\begin{overpic}{dist-fn}
\put(25,9){$a$}
\put(35,22){$a'$}
\put(45,27){$b'$}
\put(69,27){$b$}
\put(46,12){$p$}
\put(39,10){$r$}
\put(54,16){$s$}
\end{overpic}
\caption{The shortest arc from $a$ to $b$  avoiding the ball $B(p)$ consists of straight segments and an arc of the ball.}
\label{fig:dist-fn}
\end{center}
\end{figure}

The function $f(r)$ will arise again in other situations. The function $m$ was
defined exactly to make the following bound sharp:

\begin{lemma} \label{lem-lengthball} Any arc $\gamma$ from $a$ to $b$ staying outside $B(p)$ has length at least $m(|a  - p|, |b - p|,\angle apb)$.  \qed
\end{lemma}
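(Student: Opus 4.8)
The plan is to show that $m(|a-p|,|b-p|,\angle apb)$ is exactly the length of a shortest path from $a$ to $b$ in the closed exterior $E:=\R^3\setm B(p)$ of the unit ball; since any competitor arc $\gamma$ avoiding $B(p)$ is at least as long as a shortest one, this yields the asserted bound. Writing $r=|a-p|$, $s=|b-p|$ and $\theta=\angle apb\in[0,\pi]$, I would first reduce to a planar problem and then compute the shortest planar path around a disk.

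For the reduction, let $\Pi$ be a plane through $p$ containing $a$ and $b$. Since $\theta\le\pi$, the rays $pa$ and $pb$ lie in a common closed half-plane of $\Pi$, so I may choose coordinates with $p$ at the origin, $\Pi=\{z=0\}$, and both $a,b$ in $\{x\ge 0\}$. Consider the folding map $F(x,y,z)=\bigl(\sqrt{x^2+z^2},\,y,\,0\bigr)$. It fixes $a$ and $b$, it preserves the distance to $p$ (so it maps $E$ into $E\cap\Pi$), and it is $1$-Lipschitz: in cylindrical coordinates about the $y$-axis it replaces the line element $dr^2+r^2\,d\phi^2$ by $dr^2$, collapsing the angular direction. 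Hence for any arc $\gamma$ from $a$ to $b$ in $E$, the image $F\circ\gamma$ is an arc from $a$ to $b$ lying in $\Pi$, avoiding the open unit disk $D:=B(p)\cap\Pi$, and no longer than $\gamma$. It therefore suffices to bound the length of planar arcs in $\Pi\setm D$.

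For the planar problem, a shortest path from $a$ to $b$ avoiding $D$ is the straight chord $\overline{ab}$ when that chord misses $D$, and otherwise is the concatenation of the tangent segment from $a$ to $\partial D$, an arc of $\partial D$, and the tangent segment from $\partial D$ to $b$; this is the classical description of geodesics in the exterior of a convex planar region, which run along straight segments off the obstacle and along the boundary where they touch it, meeting it tangentially. The right triangle with hypotenuse $pa$, one leg the radius $1$ to the tangent point $a'$, and right angle at $a'$ gives tangent length $\sqrt{r^2-1}$ and central angle $\angle apa'=\arccos(1/r)$ (and similarly for $b$). The chord misses $D$ precisely when $\theta\le\arccos(1/r)+\arccos(1/s)$, where the length is $|a-b|=\sqrt{r^2+s^2-2rs\cos\theta}$. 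Otherwise the boundary arc subtends the central angle $\theta-\arccos(1/r)-\arccos(1/s)\ge 0$, so, using $\arccos(1/r)=\tfrac\pi2-\arcsin(1/r)$, the total length is
$$\sqrt{r^2-1}+\sqrt{s^2-1}+\bigl(\theta-\arccos(1/r)-\arccos(1/s)\bigr)=f(r)+f(s)+(\theta-\pi).$$
The cosine addition formula shows the two expressions agree at $\theta=\arccos(1/r)+\arccos(1/s)$, confirming that $m$ is continuous and equal to the shortest-path length.

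The main obstacle is the rigorous justification that the minimizing planar path has this tangent--arc--tangent form: one must first produce a minimizer (by Arzel\`a--Ascoli in the complete length space $E\cap\Pi$), then rule out paths that oscillate in and out or wind around $D$, and verify the tangential matching of the straight and circular pieces. I would establish this by the first-variation argument for obstacle geodesics, or alternatively by reusing a folding/reflection idea in the plane together with the observation that radially projecting onto $\partial D$ any sub-arc that dips toward the disk can only shorten it. Everything else reduces to the elementary trigonometry above.
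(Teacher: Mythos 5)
Your proof is correct and takes essentially the approach the paper intends: the survey states this lemma with proof omitted (deferring to \cite{DDS}), and its accompanying figure caption --- ``the shortest arc from $a$ to $b$ avoiding the ball $B(p)$ consists of straight segments and an arc of the ball'' --- is exactly the tangent--arc--tangent geodesic you compute, with $m$ defined to make that bound sharp. Your $1$-Lipschitz folding map giving the reduction from $\R^3$ to the planar obstacle problem, followed by the standard convex-obstacle geodesic description and the trigonometric identity $\arccos(1/r)=\tfrac{\pi}{2}-\arcsin(1/r)$, is a complete and rigorous way to establish precisely that picture.
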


An important special case is when $\theta = \pi$.  If $a$ and $b$ lie at distances $r$ and $s$ along opposite rays from $p$
(so that $\angle apb = \pi$) then the length of any arc from $a$ to $b$ avoiding $B(p)$ is at least
$$
f(r) + f(s) =\sqrt{r^2 - 1} + \arcsin(1/r) + \sqrt{s^2 - 1} + \arcsin(1/s).
$$
%%%%%%%%%%%%
\subsubsection{Length of essential secants and quadrisecants}

We will improve our previous ropelength bounds by getting bounds on the
length of an essential arc. Intuitively, we expect an essential arc $\arc{ab}$ of a knot to ``wrap at least halfway around'' some point on the complementary arc $\arc{ba}$. Although when $|a - b| = 2$ we
can have $\len{ab} = \pi$, when $|a - b| < 2$ we expect a better lower bound for $\len{ab}$. Even though in fact an essential $\arc{ab}$ might instead ``wrap around'' some point on itself, we can still derive the desired bound.

\begin{lemma}[\cite{DDS}] If $\arc{ab}$ is an essential arc in a knot $K$ of unit thickness, then $|a-b|\geq 1$ and $\len{ab}\geq g(|a-b|)$, where
$$g(|a-b|)= \begin{cases} 2\pi - 2\arcsin(|a-b|/2) & \text{ if $0 \leq |a-b| \leq 2$}, \\
\pi & \text{ if $|a-b|\geq 2$}.
\end{cases}$$
\end{lemma}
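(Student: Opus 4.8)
The plan is to show that an essential arc is forced to \emph{wind the long way} around some point $p$ of $K$ with $|a-p|\ge 1$ and $|b-p|\ge 1$, and then to read off the length from a radial projection to the unit sphere $\partial B(p)$. It helps to first see what the target function means geometrically. If $a$ and $b$ both lie on $\partial B(p)$, so that $|a-p|=|b-p|=1$, then the chord $\overline{ab}$ subtends the central angle $\angle apb=2\arcsin(|a-b|/2)$, and the \emph{major} great-circle arc from $a$ to $b$ (the one passing around the far side of $p$) has length exactly $2\pi-2\arcsin(|a-b|/2)=g(|a-b|)$. Thus $g$ is precisely the length of the shortest arc that simultaneously avoids $B(p)$ and wraps around it, in the extremal position where $a$ and $b$ sit on the bounding sphere. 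This matches the informal picture that an essential $\arc{ab}$ must encircle a strand of the complementary arc once, being held at distance by thickness.

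Granting the winding, the length estimate is the routine part. Suppose $p$ has been produced with $|a-p|,|b-p|\ge 1$, with $\arc{ab}$ lying outside $B(p)$ (given a point $p\in\arc{ba}$ with $a,b\notin B(p)$, this exterior statement is exactly the corollary to Lemma~\ref{lem:rope-basics}), and with $\arc{ab}$ wrapping around $p$ the long way. Radial projection to $\partial B(p)$ is length-nonincreasing on arcs outside $B(p)$ (the form of Lemma~\ref{lem:project} that we need), so $\len{ab}$ is at least the length of the projected spherical path $\bar\gamma$ from $\bar a$ to $\bar b$. Because $\bar\gamma$ goes the long way, its length is at least that of the major great-circle arc, namely $2\pi-\angle apb$. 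Finally, among all configurations with $|a-p|,|b-p|\ge 1$ and fixed $|a-b|\le 2$, the law of cosines shows $\angle apb$ is largest when both distances equal $1$, so $\angle apb\le 2\arcsin(|a-b|/2)$ and hence $\len{ab}\ge 2\pi-\angle apb\ge g(|a-b|)$. In the regime $|a-b|\ge 2$ the same winding forces a reversed trisecant $apb$, and Lemma~\ref{lem:len-tri} already gives $\len{ab}\ge\pi=g(|a-b|)$; equivalently this is the $|a-b|=2$ boundary of the major-arc formula.

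The bound $|a-b|\ge 1$ is established first and separately. If $|a-b|<1$, then by Lemma~\ref{lem:rope-basics} the ball of diameter $\overline{ab}$ meets $K$ in a single unknotted subarc, and one checks that $\arc{ab}$ may then be capped by a disk sitting inside a small ball disjoint from $\arc{ba}$. This exhibits a disk bounded by $\arc{ab}\cup\overline{ab}$ with interior missing the complementary arc, so $\arc{ab}$ is inessential, contradicting the hypothesis.

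The genuine obstacle is producing the winding point $p$ and proving that $\arc{ab}$ is truly forced around it; this is where essentiality is used. The topological input is that the homotopy class $h(\arc{ab},\ell',\arc{ba})$ is nontrivial in $\R^3\setm(\arc{ab}\cup\arc{ba})$, which obstructs capping $\arc{ab}\cup\overline{ab}$ by a disk missing $\arc{ba}$; as in the Dehn's-lemma characterization of inessential arcs, this must be converted into the geometric statement that the radial image $\bar\gamma$ cannot be contracted to the short geodesic. Two points require care. First, a contractibility argument on the sphere alone is too weak, since one can shortcut near a single excluded point; the thickness of $K$ must be used, the key being that near $p$ there is an entire embedded unit-tube strand of $\arc{ba}$ rather than a single point, which rules out shortcuts and forces the major-arc length (one may phrase this as the analogue of Lemma~\ref{lem-lengthball} for a reflex angle, giving $f(|a-p|)+f(|b-p|)+(2\pi-\angle apb)-\pi$). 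Second, as the preamble to the lemma warns, the winding point may instead lie on $\arc{ab}$ itself; in that case $p$ is chosen on $\arc{ab}$ and the identical projection estimate is applied to the subarc of $\arc{ab}$ encircling $p$, again yielding length at least $g(|a-b|)$. Assembling the topological extraction of $p$, the thickness-based no-shortcut argument, and the spherical estimate completes the proof.
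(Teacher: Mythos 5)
Your computational core coincides with the paper's: radial projection to $\partial B(p)$, the "major arc" bound $2\pi-\angle apb$, and the law-of-cosines estimate $\angle apb\le 2\arcsin(|a-b|/2)$ when $|a-p|,|b-p|\ge 1$; your treatment of $|a-b|\ge 1$ is also the paper's. But there is a genuine gap exactly where you yourself locate "the genuine obstacle": you never actually produce the winding point $p$, nor prove that $\arc{ab}$ is forced the long way around it. You say the nontriviality of the homotopy class ``must be converted into the geometric statement that the radial image cannot be contracted to the short geodesic,'' and you gesture at a thickness-based no-shortcut argument, but no such conversion is carried out --- and this conversion is the entire content of the lemma, not a finishing detail. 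The paper avoids any such homotopy-on-the-sphere argument. It takes the shortest essential subarc $\arc{aq}\subseteq\arc{ab}$ (short arcs are inessential, so $q$ exists), and applies Theorem~\ref{thm:changeover} to this borderline-essential arc to produce a concrete point $p\in K$ on the segment $\overline{aq}$ with both secants $ap$ and $pq$ essential, hence $a,q\notin B(p)$, and with $apq$ a \emph{reversed} trisecant. The winding is then automatic and purely metric: $\Pi a$ and $\Pi q$ are antipodal on $\partial B(p)$, so the projected curve has length at least $\pi+(\pi-\angle apb)=2\pi-\angle apb$, and Lemma~\ref{lem:len-tri} gives $\len{aq}\ge\pi$ outright. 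In short, the paper turns essentiality into geometry through one citation of Theorem~\ref{thm:changeover}, whereas your plan defers that step to an unspecified topological argument.

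Two further points. First, your setup assumes $\arc{ab}$ stays outside $B(p)$, but this can fail, which is why the paper splits into cases: after reducing to $\len{qb}\le 2\pi/3$ (legitimate since $g\le 5\pi/3$ once $|a-b|\ge 1$), it handles $b\notin B(p)$ by the projection argument, and handles $b\in B(p)$ by a separate argument (taking the shortest essential arc starting at $q$) yielding $\len{ab}\ge 5\pi/3$. Your proposal has no counterpart to the second case. Second, your claim that when $|a-b|\ge 2$ ``the same winding forces a reversed trisecant $apb$'' is unjustified: winding around $p$ does not make $a$, $p$, $b$ collinear. It is also unnecessary --- your own projection bound $2\pi-\angle apb\ge\pi$ covers that regime, and in the paper the bound $\len{ab}\ge\pi$ comes from the reversed trisecant $apq$ supplied by Theorem~\ref{thm:changeover}, not from any trisecant through both $a$ and $b$.
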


\begin{proof} (Sketch)
If $|a-b|<1$, then by Lemma~\ref{lem:rope-basics}  the ball of diameter $\overline{ab}$ contains a single unknotted arc of $K$, and thus inessential.

Knowing that sufficiently short arcs starting at any point $a$ are inessential, consider the shortest arc $\arc{aq}$ which is essential. From Theorem~\ref{thm:changeover} there is a trisecant $apq$ with both secants $ap$ and $pq$ essential, thus $a$ and $q$ are outside $B(p)$. Since $ap$ is essential, $apq$ is reversed and by Lemma~\ref{lem:len-tri} we get $\len{ab}\geq\len{aq}\geq \pi$.

Note that $|a-b|\in[1,2]$, so $2\pi-\arcsin(|a-b|/2) \leq 5\pi/3$. Considering again $\arc{aq}$ with reversed trisecant $apq$, we have $b\notin\arc{aq}$ and $\len{aq}\geq \pi$, so we may assume $\len{qb}\leq 2\pi/3$ or the bound is trivially satisfied.

The remainder of the proof involves finding a bound on $\len{qb}$ using the fact that $\arc{qp}$ is essential. There are two cases. The first is where $b\notin B(p)$ and the whole arc $\arc{aqb}$ stays outside $B(p)$. Thus $\len{ab}$ is greater than the length of the radial projection of the arc  onto $\bdry B(p)$, giving the result. The second case is where $b\in B(p)$. Here, we let $\arc{qy}$ be the shortest essential arc starting at $q$ and use a short argument (omitted) to show that $\len{ab}\geq 5\pi/3$.
\end{proof}

\begin{figure}
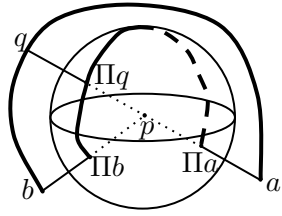

\begin{center}
\begin{overpic}{ess-sec}
\put(62,6){$a$}
\put(51,8.5){$\Pi a$}
\put(30,5){$b$}
\put(39,8){$\Pi b$}
\put(45.5,13.5){$p$}
\put(29,25){$q$}
\put(39.5,20){$\Pi q$}
\end{overpic}
\caption{The projection of $\arc{ab}$ to the ball $B(p)$ does not increase the length $\len{ab}$.}
\label{fig:ess-sec}
\end{center}
\end{figure}

It is now straightforward to compute a lower bound of the ropelength of a unit thickness knot $K$ with an essential quadrisecant $abcd$.  There are three cases, depending on the type of quadrisecant. 

\begin{itemize}
\item[1.] When $K$ has an essential simple quadrisecant, it has length $\len{ab} + \len{bc} + \len{cd} + \len{da}\geq (g(r)+f(r)) + (g(s)+s)+(g(t)+f(t))$. By minimizing each term separately, we find that $K$ has ropelength at least $10\pi/3 + 2\sqrt{3} +2 > 15.936$.
\item[2.] When $K$ has an essential flipped quadrisecant, it has length $\len{ab} + \len{bd} + \len{dc} + \len{ca}\geq (g(r)+f(r)) + 2f(s) + (g(t)+f(t))$. By minimizing each term separately, we find that $K$ has ropelength at least $10\pi/3 + 2\sqrt{3} > 13.936$.
\item[3.] When $K$ has an essential alternating quadrisecant, the length of $K$ is $\len{ac}+\len{cb}+\len{bd}+\len{da}\geq 2f(r) + (2f(s)+g(s)+s) + 2f(t)$. Again, by minimizing each term separately, we find $K$ has ropelength at least 15.66. 
\end{itemize}
Together with Theorem~\ref{thm-quad}, we conclude the following.
\begin{theorem}[\cite{DDS}] Any nontrivial knot has ropelength at least 15.66
\end{theorem}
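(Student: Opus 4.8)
The plan is to let the results assembled above do almost all of the work, so that the ropelength estimate collapses to a single computation for an alternating quadrisecant. Since $Rop$ is scale-invariant, I would first rescale $K$ to have unit thickness, so that $Rop(K)=len(K)$ and all of Lemmas~\ref{lem:rope-basics}--\ref{lem-lengthball} apply; if instead $\tau(K)=0$ then $Rop(K)=\infty$ and there is nothing to prove. A knot of unit thickness is $C^{1,1}$ with curvature bounded by $2$, hence has finite total curvature, so Theorem~\ref{thm-quad} supplies an \emph{essential alternating} quadrisecant $abcd$. It is exactly this alternating guarantee that produces the final constant: the flipped case would give only $13.936$, but we never invoke it, since every such knot carries an \emph{alternating} quadrisecant. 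Orienting $K$ so that $b\in\arc{ad}$ places the four points in cyclic order $acbd$ along the knot, splitting $K$ into the consecutive arcs $\arc{ac}$, $\arc{cb}$, $\arc{bd}$, $\arc{da}$, with $len(K)=\len{ac}+\len{cb}+\len{bd}+\len{da}$.

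Writing $r=|a-b|$, $s=|b-c|$, $t=|c-d|$, I would bound each arc in turn. Because the middle secant $bc$ is essential, $\arc{cb}$ is an essential arc, which forces $s\geq 1$ and $\len{cb}\geq g(s)$. The points $a,b,c$ are collinear with $b$ between $a$ and $c$, so $\angle abc=\pi$; combining the fact that $\arc{ac}$ lies outside $B(b)$ (Lemma 4.2 of \cite{DDS}, valid once $s\geq 1$) with the $\theta=\pi$ case of Lemma~\ref{lem-lengthball} gives $\len{ac}\geq f(r)+f(s)$. Symmetrically $\angle bcd=\pi$ and $\arc{bd}$ lies outside $B(c)$, so $\len{bd}\geq f(s)+f(t)$. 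The remaining return arc $\arc{da}$ avoids \emph{both} $B(b)$ and $B(c)$ by the same lemma; wrapping a half-turn around $B(b)$ near $a$, a half-turn around $B(c)$ near $d$, and crossing the separation $s$ between the two centers gives $\len{da}\geq f(r)+f(t)+s$.

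Adding the four estimates produces exactly
\[
len(K)\;\geq\;2f(r)+\bigl(2f(s)+g(s)+s\bigr)+2f(t),
\]
and it remains to minimize over $r,s,t\geq 1$. The outer terms $2f(r)$ and $2f(t)$ are each minimized at $r=t=1$, where $f(1)=\pi/2$. A one-variable computation shows the middle term $2f(s)+g(s)+s$ is minimized very close to $s=1$, with value approximately $8\pi/3+1\approx 9.38$. Summing the three contributions yields $len(K)\geq 15.66$, the claimed bound.

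Since Theorem~\ref{thm-quad} and the essential-arc bound are already in hand, the residual work is geometric bookkeeping rather than deep topology. The delicate step is the two-ball estimate $\len{da}\geq f(r)+f(t)+s$: one must check that the long arc truly wraps a full half-turn around each of $B(b)$ and $B(c)$ while the two wrapping regions stay disjoint, so that their contributions add to the intervening straight length $s$. One must likewise verify that the four arcs are mutually disjoint, so that $\arc{ac}$ may charge $B(b)$, $\arc{bd}$ may charge $B(c)$, and $\arc{da}$ may charge both, with no double counting. The essential-arc bound $\len{cb}\geq g(s)$ is itself the substantive input, its proof resting on the changeover theorem (Theorem~\ref{thm:changeover}); granting it and the quadrisecant distance lemma, the concluding minimization is elementary single-variable calculus.
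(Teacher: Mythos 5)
Your proposal is correct and follows essentially the same route as the paper: rescale to unit thickness, use the fact that a thick knot is $C^{1,1}$ (hence of finite total curvature) to invoke Theorem~\ref{thm-quad} for an essential alternating quadrisecant, decompose $K$ into the four arcs $\arc{ac}$, $\arc{cb}$, $\arc{bd}$, $\arc{da}$, and bound them via the essential-arc bound $g$, Lemma~\ref{lem-lengthball}, and the avoidance of $B(b)\cup B(c)$, arriving at exactly the paper's inequality $len(K)\geq 2f(r)+\bigl(2f(s)+g(s)+s\bigr)+2f(t)$ and the termwise minimization giving $15.66$. The step you flag as delicate, $\len{da}\geq f(r)+s+f(t)$, is indeed the one bound the survey leaves implicit (it is proved in \cite{DDS}), and your tangent-plus-arc-plus-tangent picture is the right justification for it.
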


Several independent numerical simulations (see for instance \cite{Pie,Sul})
have found a trefoil knot with ropelength less than~$16.374$, which is presumably close to the minimizer. This is about 5\% greater than our bound, so there is not much room for improvement, although a careful analysis based on tangent directions at $b$ and $c$ could yield a slightly better bound.

As a final remark, we note that the ropelength problem is still open for all knot and most link types --- it is a rich source of open questions. For example, there are many results relating ropelength to other knot invariants, in particular to crossing number, for instance \cite{BS, CFM-rope, DE-hamilton, DE-curv-rope-cr, DEZ-conway, HHKNO}. More recently, there have been several papers giving a set of necessary and sufficient conditions for ropelength criticality, for example \cite{ SvdM, CFKSW, SW, CFKS-rl}.

%%%%%%%%%%%%%%%%%%%%%%%%%
\subsection{Distortion}
M. Gromov introduced the notion of distortion for curves (see for instance \cite{Grom-dist,Grom-dil, Grom-83}).
\begin{definition}
If $\gamma$ is a rectifiable curve in $\R^3$, then its distortion is defined to be the quantity:
$$\delta(\gamma) = \sup_{p,q\in\gamma}\frac{d_\gamma(p, q)}{d_{\R^3}(p, q)} \geq 1.$$
where $d_\gamma$ denotes the arclength along $\gamma$ and $d_{\R^3}$ denotes the Euclidean distance in $\R^3$.
\end{definition}

Gromov showed that for any simple closed curve $\gamma$, we
have $\delta(\gamma) \geq \frac{1}{2}\pi$, with equality if and only if $\gamma$ is a circle, thus determining $\delta(\text{unknot})$. He then asked whether every knot type can be built with say, $\delta\leq 100$. 
In  \cite{DS-dist}, we proved that for any nontrivial tame knot we have $\delta(K) \geq \frac{5}{3}\pi$. To do this, we first showed that {\em any} nontrivial tame knot has a shortest essential secant.  Then, borderline-essential arcs and their lengths (namely Theorem~\ref{thm:changeover} and Lemma~\ref{lem-lengthball}) were key tools used in our distortion computations. Our bound is of course not sharp, but numerical simulations  \cite{Mull-PhD} have found a trefoil knot with distortion
less than 7.16, so we are not too far off. We expect the true minimum distortion for a trefoil is closer to that upper bound than to our lower bound. 

In 2011, J. Pardon \cite{Pardon} proved that the distortion of $T_{p,q}$, a $(p,q)$ torus knot, is $\delta(T_{p,q}) \geq \frac{1}{160}\min(p,q)$. This shows the answer to Gromov's question is no. His main theorem considers isotopy classes of simple loops in a piecewise-linear embedded surface of genus $g\geq 1$, and his inequality involves certain minimum geometric intersection numbers of these loops.

In general, distortion is quite tricky to get a handle on. In \cite{DS-dist} we give an example of a wild knot, the connect sum of infinitely many trefoils, with distortion less than 10.7. Unlike the ropelength case, having finite distortion does not put any regularity conditions on the curve. Because of this, it is an open problem to establish the existence of minimizers of $\delta$ on any nontrivial knot class. In 2007, C. Mullikin \cite{Mull} started to develop a calculus of variations theory for distortion. However, it is not clear whether the techniques developed for ropelength criticality can be easily applied to this situation.

%%%%%%%%%%%%%%%%%%%%%%%%%
\subsection{Final Remarks}
Quadrisecants have made an appearance in other parts of knot theory as well. For, example they give a starting place to finding information about two ``super-invariants'' of knots.
 
Recall that the bridge index of a knot is defined to be the minimum number of bridges in all possible diagrams of a knot in its knot type. The {\em superbridge index} was first defined by N. Kuiper \cite{Kuip}, and is 
$$ sb[K] = \min_{K'\in[K]} \max_{\vec{v}\in S^2} b_{\vec{v}}(K'),$$ 
where $b_{\vec{v}}(K)$ is the number of bridges (or local maxima) of an orthogonal projection $K\rightarrow \R\vec{v}$. Kuiper then computed the superbridge index for all torus knots. Later, Jin and C.B. Jeon 
 \cite{Jin-sb,JJ-sb}) used quadrisecants to show that there are only finitely many knot types with superbridge index 3.
 
The supercrossing index of a knot is related to crossing index in the same way that superbridge index is related to bridge index. Namely, the {\em supercrossing index} of $K$ is
$$ sc[K]=\min_{K\in[K]} \max_{\vec{v}\in S^2} (\#\text{ of crossings}).$$
More simply, we maximize the number of crossings we see generated by a given conformation of a knot, and then minimize over all conformations.  This invariant was studied by C. Adams {\em et al.} \cite{Adams-supercrossing} and was related to other knot invariants like stick index. The existence of quadrisecants mean that every nontrivial tame knot has supercrossing index at least 6. To see this, simply perturb the projection along the quadrisecant line to find six crossings.

In summary, quadrisecants and essential secants appear in many different parts of knot theory. They are a useful tool in understanding many phenomena of knots and links, as they form a bridge between topological and geometric properties. For all this, quadrisecants are still not completely understood. One of the more important open problems is to give bounds on the number of quadrisecants for knot and link types.

%%%%%%%%%%%%%%%%%%%%%%%%%%%%%%

\section{Acknowledgement}
I first worked on quadrisecants as part of my PhD thesis, and I am forever grateful to my advisor John M. Sullivan for introducing me to them. I am also deeply appreciative of the many helpful conversations I have had over the years with Stephanie Alexander, Ryan Budney, Jason Cantarella, and Cliff Taubes about this material. Thanks also go to my coauthor Yuanan Diao for first realizing that quadrisecants could be applied to the ropelength problem.

%%%%%%%%%%%%%%%%%%%%%%%%%%%%%%

\bibliographystyle{amsalpha}
\bibliography{bibliography.bib}

\end{document}